\theoremstyle{plain}
\newtheorem{theorem}{Theorem}[section]
\newtheorem{corollary}[theorem]{Corollary}
\newtheorem{lemma}[theorem]{Lemma}
\newtheorem{proposition}[theorem]{Proposition}
\newtheorem{observation}[theorem]{Observation}
\theoremstyle{remark}
\newtheorem*{claim}{Claim}
\begin{document}

\title[Borel sets, Baire spaces, and filtrations]{$\kappa$-Borel sets, $\kappa$-Baire spaces, and\\ filtrations between topologies}

\author{S{\l}awomir Solecki}
\address{Department of Mathematics, Malott Hall, Cornell University, Ithaca, NY 14853}
\email{ssolecki@cornell.edu}

\begin{abstract} Filtrations are certain transfinite sequences of topologies increasing in strength and interpolating between two given topologies $\sigma$ and $\tau$, with $\tau$ being stronger than $\sigma$. We prove general results on stabilization at $\tau$ of filtrations interpolating between $\sigma$ and $\tau$. These topological results involve an interplay between $\kappa$-Borel sets with respect to the topology $\sigma$ and $\kappa$-Baireness of the topology $\tau$. 
\end{abstract}

\thanks{Research supported by NSF grant DMS-2246873.}

\keywords{Filtration between topologies, $\kappa$-Baire spaces, $\kappa$-Borel sets}

\subjclass[2010]{03E15, 54H05} 


\maketitle

\tableofcontents

\section{Introduction} 

{\bf All topologies are defined on a fixed set $X$.}

We continue our analysis, initiated in \cite{So3}, of increasing transfinite sequences of topologies interpolating between 
two given topologies $\sigma\subseteq \tau$. Such sequences of topologies are relevant to some descriptive set theoretic considerations; see 
\cite[Section 1]{Lo}, \cite[Sections 5.1--5.2]{BK}, \cite[Section 2]{Be}, 
\cite[Section 2]{So1}, \cite[Chapter 6]{Hj1}, \cite[Section 3]{FS}, \cite[Sections 2--4]{So2}, \cite{Hj2}, \cite{Dr}, and 
\cite[Sections 3--5]{BDNT}. 

To make the discussion precise, recall from \cite{So3} the notion of filtration. 
Let $\sigma\subseteq \tau$ be topologies, and let $\rho$ be an ordinal. A transfinite sequence $(\tau_\xi)_{\xi<\rho}$ 
of topologies is called a {\bf filtration from $\sigma$ to $\tau$} if
\begin{equation}\label{E:cot}
\sigma= \tau_0\subseteq \tau_1\subseteq \cdots \subseteq \tau_\xi \subseteq \cdots \subseteq \tau
\end{equation}
and, for each $\alpha<\rho$, if $F$ is $\tau_\xi$-closed for some $\xi<\alpha$, then 
\begin{equation}\label{E:intap2}
{\rm int}_{\tau_\alpha}(F) = {\rm int}_{\tau}(F). 
\end{equation}
Condition \eqref{E:intap2} asserts that $\tau_\alpha$ computes correctly, that is, in agreement with $\tau$, 
the interiors of sets that are simple from the point of view of $\alpha$, that is, sets that are $\tau_\xi$-closed with $\xi<\alpha$. 
A filtration from $\sigma$ to $\tau$ can be thought of as a walk from $\sigma$ to $\tau$ through intermediate topologies, where each step is required to contribute a nontrivial advance towards $\tau$, if such an advance is at all possible. 

There is always a trivial filtration from $\sigma$ to $\tau$---define $\tau_\xi$, for an ordinal $\xi$, by letting $\tau_0=\sigma$ and $\tau_\xi=\tau$ for $\xi>0$. Then, for each ordinal $\rho>0$, the sequence $(\tau_\xi)_{\xi<\rho}$ is a filtration from $\sigma$ to $\tau$. This is the fastest filtration from $\sigma$ to $\tau$, in the sense that, for each filtration $(\tau_\xi')_{\xi<\rho}$ from $\sigma$ to $\tau$, we have $\tau_\xi'\subseteq \tau_\xi$. More interestingly, another canonical filtration from $\sigma$ to $\tau$ was defined in \cite{So3}. Namely, a transfinite sequence of topologies, denoted by $(\sigma, \tau)_\xi$, for an ordinal $\xi$, was described in \cite[Section~2]{So3} with the property that, for each ordinal $\rho>0$, $((\sigma, \tau)_\xi)_{\xi<\rho}$ is a filtration from $\sigma$ to $\tau$, and, for an arbitrary filtration $(\tau_\xi')_{\xi<\rho}$ from $\sigma$ to $\tau$, one has $(\sigma, \tau)_\xi\subseteq \tau_\xi'$. So, 
$((\sigma, \tau)_\xi)_{\xi<\rho}$ is the slowest filtration from $\sigma$ to $\tau$. 
See Section~\ref{Su:slo} below for the definition of $(\sigma,\tau)_\xi$.

The following question about filtrations immediately presents itself. 
Given a filtration $(\tau_\xi)_{\xi<\rho}$ from $\sigma$ to $\tau$, does the filtration actually reach its goal $\tau$ and, if so, at what stage? 
That is, does $\tau_\xi=\tau$ for some $\xi<\rho$ and is there an upper estimate on such $\xi$? 
For example, does the slowest filtration $((\sigma, \tau)_\xi)_{\xi<\rho}$ reach $\tau$? 
The issue of this filtration reaching $\tau$ was left unresolved in \cite{So3}.

One expects a positive answer to the question above under an appropriate assumption---$\tau$ should be at a visible distance from $\sigma$. We phrase this as follows. Let $\kappa$ be an infinite cardinal such that $\tau$ is $\kappa$-Baire; see Section~\ref{Su:bai} for the definition of $\kappa$-Baire and for the existence of such $\kappa$ for each topology $\tau$. The assumption on the pair of topologies $\sigma\subseteq \tau$, which we will call the {\em appropriate assumption} in the discussion below, is then for $\tau$ to have a neighborhood basis that consists of sets that are $\kappa$-Borel with respect to $\sigma$; see Section~\ref{Su:pisi} for the definition of $\kappa$-Borel sets. In this situation, the distance from $\sigma$ to $\tau$ is quantified by the complexity of sets in the neighborhood basis of $\tau$ as measured by the classical descriptive set theoretic hierarchy of $\kappa$-Borel sets; see Section~\ref{Su:pisi} for the definition of this hierarchy.

Paper \cite{So3} gave some results that contained a positive answer to the question. The results supposed that the {\em appropriate assumption} holds with $\kappa=\omega_1$, supposed additionally that the topology $\tau$ is regular, and placed quite stringent assumptions of metrizability on all the topologies in the filtration and of Bairness on some of them. In particular, these results were not applicable to the slowest filtration $((\sigma, \tau_\xi))_{\xi<\rho}$.

In the present paper, we prove theorems that strengthen the results in \cite{So3}. Theorem~\ref{T:stab2} asserts that, under a mild structural assumption of semiregularity on $\tau$, if the pair of topologies $\sigma\subseteq \tau$ satisfies the {\em appropriate assumption} with some infinite cardinal $\kappa$, then each filtration from $\sigma$ to $\tau$ reaches $\tau$ at a step commensurate with the descriptive complexity of sets in a neighborhood basis of $\tau$. In this theorem, the descriptive complexity is measured with the ${\mathbf \Pi}$ side of the classical hierarchy of $\kappa$-Borel sets. 
This result is a direct strengthening of the main results of \cite{So3}. First, no assumptions are made on the topologies in the filtration and only the semiregularity condition is put on $\tau$. 
The second improvement consists of freeing $\kappa$ to be an arbitrary infinite cardinal (from $\kappa=\omega_1$) in the assumptions of 
$\kappa$-Baireness of $\tau$ and $\kappa$-Borelness with respect to $\sigma$ of sets in a basis of $\tau$. Because of its generality, Theorem~\ref{T:stab2} can be applied to the slowest filtration $((\sigma, \tau)_\xi)_{\xi<\rho}$ showing that it terminates at $\tau$ under the {\em appropriate assumption}. This is done in Corollary~\ref{C:stab2}. 
Additionally, we prove a second theorem, Theorem~\ref{T:stab3}, that has a similar to Theorem~\ref{T:stab2}, but weaker, conclusion under weaker assumptions on the complexity of a basis of $\tau$. In this theorem, the descriptive complexity is measured with the ${\mathbf \Sigma}$ side of the hierarchy of $\kappa$-Borel sets. This theorem implies a corresponding result on the slowest filtration; see Corollary~ \ref{C:stab3}. 

The two improvements over \cite{So3}---the lack of restrictions on the topologies in the filtration 
and the lack of restrictions on the infinite cardinal $\kappa$--- put the results on termination at $\tau$ in what appears to be their optimal form. They make new applications possible, for example, applications to the slowest filtration $((\sigma, \tau)_\xi)_{\xi<\rho}$ and to topologies $\tau$ that do not satisfy any Baireness assumptions (as each topology is $\omega$-Baire). Further, the generalization to an arbitrary infinite cardinal $\kappa$ adds interest to the interaction between the degree of Bairness of $\tau$ with the degree of Borelness of a basis of $\tau$.

\section{Background information} 

We recall here some topological notions and establish notation and our conventions related to $\kappa$-Baireness and $\kappa$-Borelness. We also provide proofs of some relevant basic observations.

{\bf We fix a topology $\tau$.} 

{\bf We fix an infinite cardinal $\kappa$.}

\subsection{Basic notions and notation}

We write 
\[
{\rm cl}_\tau\;\hbox{ and }\;{\rm int}_\tau
\]
for the operations of closure and interior with respect to $\tau$. 

If $x\in X$, by a {\bf neighborhood of $x$} we understand a subset of $X$ that contains $x$ in its $\tau$-interior. 
A {\bf neighborhood basis of $\tau$} is a family $\mathcal A$ of subsets of $X$ such that for each $x\in X$ and each open set $B$ containg  $x$, there exists $A\in {\mathcal A}$ such that $x$ is in the $\tau$-interior of $A$ and $A\subseteq B$. A {\bf neighborhood $\pi$-basis of $\tau$} is a family $\mathcal A$ of subsets of $X$ such that for each nonempty $\tau$-open set $B$ there 
exists $A\in {\mathcal A}$ such that $A$ has nonempty interior and $A\subseteq B$.  So a neighborhood basis or a neighborhood $\pi$-basis  need not consist of open sets.

Given a family of topologies $S$, we write 
\[
\bigvee S
\]
for the topology whose 
basis consist of sets of the form $U_0\cap \cdots \cap U_n$, where each $U_i$, $i\leq n$, is $\sigma$-open for some $\sigma\in S$. This is the smallest 
topology containing each topology in $S$.

\subsection{$\kappa$-Baireness}\label{Su:bai} 

A set is called {\bf $\kappa$-meager} if it is the union of $<\kappa$ many $\tau$-nowhere dense sets. 
Observe that $\omega$-meager sets are $\tau$-nowhere dense sets. 
The topology $\tau$ is called {\bf $\kappa$-Baire} if the complement of each $\kappa$-meager set is dense.

\begin{observation}\label{O:bai} 
Either $\tau$ is $\kappa$-Baire for each infinite cardinal $\kappa$, or there is a largest infinite cardinal $\kappa$ with the property that $\tau$ is $\kappa$-Baire. 
\end{observation}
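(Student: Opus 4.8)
The plan is to study the class $I$ of infinite cardinals $\kappa$ for which $\tau$ is $\kappa$-Baire, to check that $I$ is a nonempty initial segment of the class of all infinite cardinals, and then to show that if $I$ is a proper initial segment it must have a largest element. These two facts together give exactly the stated dichotomy.

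First I would record two elementary observations. \emph{Downward closure:} if $\kappa\le\lambda$, then a union of fewer than $\kappa$ nowhere dense sets is also a union of fewer than $\lambda$ nowhere dense sets, so every $\kappa$-meager set is $\lambda$-meager; consequently, if $\tau$ is $\lambda$-Baire then $\tau$ is $\kappa$-Baire, i.e.\ $I$ is an initial segment of the infinite cardinals. \emph{Nonemptiness:} the $\omega$-meager sets are precisely the nowhere dense sets, and the complement of a nowhere dense set is dense; hence $\tau$ is $\omega$-Baire and $\omega\in I$.

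Now suppose $I$ is not the class of all infinite cardinals; we must produce a largest element. Then $I$ is bounded, hence a set, and $\lambda:=\sup I$ is a cardinal with $\lambda\geq\omega$. Assume toward a contradiction that $\lambda\notin I$. Since $I$ is an initial segment with supremum $\lambda$, every infinite cardinal strictly below $\lambda$ lies in $I$, while $\lambda$ and everything above it do not; in particular $\lambda>\omega$. Moreover $\lambda$ cannot be a successor cardinal $\mu^{+}$: otherwise all members of $I$ would be $\le\mu$, forcing $\sup I\le\mu<\mu^{+}=\lambda$. So $\lambda$ is an uncountable limit cardinal. The main step is to derive from this that $\tau$ is $\lambda$-Baire, contradicting $\lambda\notin I$. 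Let $M$ be $\lambda$-meager, say $M$ is a union of $\mu$ many nowhere dense sets with $\mu<\lambda$. Because $\lambda$ is a limit cardinal, $\mu^{+}<\lambda$, hence $\mu^{+}\in I$, i.e.\ $\tau$ is $\mu^{+}$-Baire; since $M$ is $\mu^{+}$-meager, its complement is dense. Thus $\tau$ is $\lambda$-Baire, the desired contradiction, and therefore $\lambda\in I$. Then $\lambda$ is the largest infinite cardinal for which $\tau$ is $\kappa$-Baire.

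The argument is short, and the only place that calls for a little care is the limit-cardinal bookkeeping: observing that when $\mu<\lambda$ with $\lambda$ a limit cardinal one has $\mu^{+}<\lambda$, so that a $\lambda$-meager set is already $\mu^{+}$-meager for a cardinal $\mu^{+}$ below $\lambda$. This is precisely what propagates $\kappa$-Baireness for all $\kappa<\lambda$ up to $\lambda$ and closes the only gap. Everything else is immediate from the definitions of $\kappa$-meager and $\kappa$-Baire.
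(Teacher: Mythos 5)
Your proof is correct and follows essentially the same route as the paper: both identify the set of infinite cardinals $\kappa$ for which $\tau$ is $\kappa$-Baire, note it contains $\omega$ and is bounded when the first alternative fails, and then verify that its supremum is itself a cardinal witnessing $\kappa$-Baireness. The paper compresses the final verification into ``directly from the definition,'' whereas you spell it out (via the successor/limit case split and the observation that a $\lambda$-meager set is already $\mu^{+}$-meager for some $\mu^{+}<\lambda$); the underlying argument is the same.
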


\begin{proof} Assume there is an infinite cardinal $\lambda$ such that $\tau$ is not $\lambda$-Baire. Note that each topology is $\omega$-Baire. 
Consider the set 
\[
\{ \kappa\mid \hbox{ $\kappa$ is a cardinal and $\tau$ is $\kappa$-Baire}\}.
\]
It is non-empty, as $\omega$ is in it, and bounded by $\lambda$. Its supremum is a cardinal $\kappa_0$. Directly from the definition we see that $\tau$ is $\kappa_0$-Baire.  
\end{proof}

\subsection{$\kappa$-Borel sets, ${\mathbf \Pi}$ and ${\mathbf \Sigma}$ classes of the Borel hierarchy}\label{Su:pisi} 

We define $\kappa$-Borel sets and their stratification into $\mathbf \Pi$ and $\mathbf \Sigma$ classes. 
Some care in formulating these definitions is needed as we want to incorporate the case of singular cardinals $\kappa$ in the right way. 
In the case of regular $\kappa$ the definitions we give coincides with the naive definitions---see Observations~\ref{O:bore}, \ref{O:borep}, and \ref{O:bores} below. 

First, for a cardinal $\nu$, let
\[
{\rm bor}(\nu)
\] 
be the smallest family of sets containing all closed sets and closed under taking complements and unions, and therefore also intersections, of families of cardinality $\leq \nu$. Define {\bf $\kappa$-Borel sets} as 
\[
\bigcup \{ {\rm bor}(\nu) \mid \nu\hbox{ a cardinal number},\, \nu<\kappa\}. 
\]
Observe that $\omega$-Borel sets form the algebra of sets generated by closed or open sets. 
The case $\kappa=\omega_1$ is the case studied in classical descriptive set theory; see \cite{Ke}.  

\begin{observation}\label{O:bore}
Assume $\kappa$ is regular. The family of $\kappa$-Borel sets is equal to the smallest family containing all closed sets and closed under taking complements and unions, and therefore also intersections, of families of cardinality $<\kappa$.
\end{observation}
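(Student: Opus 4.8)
The plan is to prove the two inclusions separately. Write $\mathcal K$ for the family of $\kappa$-Borel sets, that is, $\bigcup\{{\rm bor}(\nu)\mid \nu<\kappa\}$, and write $\mathcal B$ for the family in the statement, that is, the smallest family of sets containing all closed sets and closed under complements and under unions of families of cardinality $<\kappa$. Before starting, I would record a monotonicity remark that is used repeatedly: if $\nu\le\nu'$ then ${\rm bor}(\nu)\subseteq{\rm bor}(\nu')$, because ${\rm bor}(\nu')$ contains all closed sets and is closed under complements and under unions of size $\le\nu'$, hence in particular of size $\le\nu$, so minimality of ${\rm bor}(\nu)$ gives the inclusion. (Closure under intersections is free from De Morgan, so I will not mention it again.)

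For $\mathcal K\subseteq\mathcal B$, fix $\nu<\kappa$. Then $\mathcal B$ contains all closed sets and is closed under complements and under unions of size $<\kappa$, hence of size $\le\nu$; by minimality of ${\rm bor}(\nu)$ we get ${\rm bor}(\nu)\subseteq\mathcal B$, and taking the union over $\nu<\kappa$ yields $\mathcal K\subseteq\mathcal B$. This direction uses nothing about $\kappa$ beyond its being infinite. For the reverse inclusion $\mathcal B\subseteq\mathcal K$, by minimality of $\mathcal B$ it is enough to verify that $\mathcal K$ contains all closed sets and is closed under complements and under unions of families of cardinality $<\kappa$. Closed sets belong to ${\rm bor}(\omega)\subseteq\mathcal K$ (indeed to every ${\rm bor}(\nu)$). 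If $A\in\mathcal K$, choose $\nu<\kappa$ with $A\in{\rm bor}(\nu)$; then $X\setminus A\in{\rm bor}(\nu)\subseteq\mathcal K$. Finally, given $\{A_i\mid i\in I\}\subseteq\mathcal K$ with $|I|<\kappa$, choose for each $i$ a cardinal $\nu_i<\kappa$ with $A_i\in{\rm bor}(\nu_i)$, and set $\nu=\max\bigl(\omega,\,|I|,\,\sup_{i\in I}\nu_i\bigr)$. Since $\kappa$ is regular and $|I|<\kappa$, the supremum of the $\le|I|$ many cardinals $\nu_i<\kappa$ is below $\kappa$, so $\nu<\kappa$. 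By the monotonicity remark each $A_i\in{\rm bor}(\nu)$, and since $|I|\le\nu$ the family ${\rm bor}(\nu)$ is closed under the union $\bigcup_{i\in I}A_i$; thus $\bigcup_{i\in I}A_i\in{\rm bor}(\nu)\subseteq\mathcal K$, completing the verification.

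The only step with real content is this last one, and the obstacle it overcomes is precisely the subtlety flagged in the text. Without regularity, a union of $<\kappa$ many $\kappa$-Borel sets, each living in some ${\rm bor}(\nu)$ with $\nu<\kappa$, could have the relevant ranks $\nu_i$ cofinal in $\kappa$, so the union would escape every single ${\rm bor}(\nu)$ with $\nu<\kappa$ and fail to be $\kappa$-Borel in the present sense; regularity of $\kappa$ forces $\sup_i\nu_i<\kappa$, pinning the union into one fixed level of the ${\rm bor}$-hierarchy. I expect no further difficulties: everything else is bookkeeping with the minimality clauses defining $\mathcal K$, $\mathcal B$, and the ${\rm bor}(\nu)$'s.
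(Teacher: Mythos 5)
Your proof is correct and follows the same route as the paper: the inclusion of the $\kappa$-Borel sets into the smallest family is immediate from minimality of each ${\rm bor}(\nu)$, and the reverse inclusion reduces, by minimality of that family, to checking that $\bigcup_{\nu<\kappa}{\rm bor}(\nu)$ is closed under complements and unions of $<\kappa$ sets, with regularity used exactly where you place it. The paper states this in two sentences; you have simply filled in the bookkeeping (the monotonicity of $\nu\mapsto{\rm bor}(\nu)$ and the bound $\sup_i\nu_i<\kappa$) that the paper leaves implicit.
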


\begin{proof} The inclusion $\subseteq$ is immediate and does not use regularity of $\kappa$. 
If $\kappa$ is regular, then clearly 
\[
\bigcup \{ {\rm bor}(\nu) \mid \nu\hbox{ a cardinal number},\, \nu<\kappa\}. 
\]
is closed under taking complements and unions of $<\kappa$ sets, which shows the inclusion $\supseteq$. 
\end{proof}

To clarify further our definition of $\kappa$-Borel sets, note that if $\kappa$ is singular, then the family of sets defined by the condition in Observation~\ref{O:bore} is equal to $\kappa^+$-Borel sets and not $\kappa$-Borel sets. 

We describe now a well known stratification of the family of $\kappa$-Borel sets.  
We define 
classes ${\mathbf \Pi}^{\kappa,0}_{1+\xi}$ and ${\mathbf \Sigma}^{\kappa,0}_{1+\xi}$, for $\xi<\kappa$. To incorporate singular cardinals $\kappa$, the same type of care is needed here 
as in the definition of $\kappa$-Borel sets. 
When $\kappa=\omega_1$, the classes above form the well-studied hierarchy of Borel sets; see \cite[Section~11]{Ke}. We conform to the tradition of enumerating these classes starting with $1$ rather than $0$.

For a cardinal $\nu>0$ and an ordinal $\xi<\nu^+$, we define families 
\[
{\mathbf P}^\nu_\xi.
\] 
Let 
${\mathbf P}^\nu_0$ 
be the family of all closed sets and, for $0<\xi<\nu^+$, let 
${\mathbf P}^\nu_\xi$
consist of intersections of subfamilies of cardinality $\leq \nu$ of complements of sets in $\bigcup_{\gamma<\xi}{\mathbf P}^\nu_\gamma$.

For an ordinal $\xi<\kappa$, define
\[
{\mathbf \Pi}^{\kappa,0}_{1+\xi} = \bigcup \{ {\mathbf P}^\nu_\xi\mid \nu \hbox{ a cardinal},\, \nu<\kappa,\, \xi<\nu^+\}
\]

\begin{observation}\label{O:borep}
\begin{enumerate}
\item[(i)] ${\mathbf \Pi}^{\kappa,0}_1$ is the class of all closed sets. 

\item[(ii)] If $\kappa$ is regular, then, 
for $0<\xi<\kappa$, ${\mathbf \Pi}^{\kappa,0}_{1+\xi}$ consists of all intersections of $<\kappa$ many complements of sets in 
$\bigcup_{\gamma<\xi} {\mathbf \Pi}^{\kappa,0}_{1+\gamma}$. 
\end{enumerate} 
\end{observation}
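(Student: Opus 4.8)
The plan is to read off (i) directly from the definitions and to obtain (ii) by isolating a monotonicity property of the families ${\mathbf P}^{\nu}_{\xi}$ in the superscript $\nu$; regularity of $\kappa$ will enter the argument at exactly one point, namely to bound a family of fewer than $\kappa$ cardinals, each below $\kappa$, by a single cardinal below $\kappa$. For (i): since $\kappa$ is infinite there is a cardinal $\nu$ with $0<\nu<\kappa$, and $0<\nu^{+}$ holds for every cardinal $\nu$, so the index set in the definition of ${\mathbf \Pi}^{\kappa,0}_{1}={\mathbf \Pi}^{\kappa,0}_{1+0}$ is nonempty; as each ${\mathbf P}^{\nu}_{0}$ is by definition the family of all closed sets, ${\mathbf \Pi}^{\kappa,0}_{1}$ equals that family.

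For (ii) I would first record a monotonicity lemma: if $0<\mu\le\nu$ are cardinals and $\gamma<\mu^{+}$, then ${\mathbf P}^{\mu}_{\gamma}\subseteq{\mathbf P}^{\nu}_{\gamma}$. This follows by induction on $\gamma$: for $\gamma=0$ both families are the family of closed sets, and for $\gamma>0$ a member of ${\mathbf P}^{\mu}_{\gamma}$ is an intersection of a subfamily of cardinality $\le\mu$ of complements of sets from $\bigcup_{\delta<\gamma}{\mathbf P}^{\mu}_{\delta}$, which by the inductive hypothesis lie in $\bigcup_{\delta<\gamma}{\mathbf P}^{\nu}_{\delta}$, so the intersection lies in ${\mathbf P}^{\nu}_{\gamma}$ since $\mu\le\nu$. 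No hypothesis on $\kappa$ is used here.

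Granting the lemma, the inclusion $\subseteq$ in (ii) uses no hypothesis on $\kappa$: if $A\in{\mathbf P}^{\nu}_{\xi}$ with $\nu<\kappa$ and $\xi<\nu^{+}$, then $A$ is an intersection of $\le\nu<\kappa$ complements of sets in $\bigcup_{\gamma<\xi}{\mathbf P}^{\nu}_{\gamma}$, and each such set lies in some ${\mathbf P}^{\nu}_{\gamma}$ with $\gamma<\xi<\nu^{+}$, hence in ${\mathbf \Pi}^{\kappa,0}_{1+\gamma}$. For the inclusion $\supseteq$, write $A=\bigcap_{i\in I}(X\setminus B_{i})$ with $|I|<\kappa$ and, for each $i$, $B_{i}\in{\mathbf \Pi}^{\kappa,0}_{1+\gamma_{i}}$ for some $\gamma_{i}<\xi$, and choose cardinals $\nu_{i}<\kappa$ with $\gamma_{i}<\nu_{i}^{+}$ and $B_{i}\in{\mathbf P}^{\nu_{i}}_{\gamma_{i}}$. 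Put $\nu=\max\bigl(|I|,\,|\xi|,\,\sup_{i\in I}\nu_{i}\bigr)$; then $\nu$ is a cardinal, $\nu>0$ since $\xi>0$, and $\nu<\kappa$ by regularity of $\kappa$, because $|I|<\kappa$, $|\xi|<\kappa$, and $\{\nu_{i}:i\in I\}$ is a family of fewer than $\kappa$ cardinals each below $\kappa$. Since $|I|\le\nu$, $\xi<\nu^{+}$, and $\nu_{i}\le\nu$ for every $i$, the lemma yields $B_{i}\in{\mathbf P}^{\nu}_{\gamma_{i}}\subseteq\bigcup_{\gamma<\xi}{\mathbf P}^{\nu}_{\gamma}$, whence $A\in{\mathbf P}^{\nu}_{\xi}\subseteq{\mathbf \Pi}^{\kappa,0}_{1+\xi}$.

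The one step that uses anything beyond the definitions is the choice of the single cardinal $\nu$ above, and this is precisely where regularity of $\kappa$ is indispensable: for singular $\kappa$ the cardinals $\nu_{i}$ could be cofinal in $\kappa$, and in fact the statement of (ii) then fails, in agreement with the remark following Observation~\ref{O:bore}. Everything else is routine bookkeeping, the monotonicity lemma being the only auxiliary fact worth stating separately.
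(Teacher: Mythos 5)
Your proposal is correct and follows essentially the same route as the paper: (i) is read off the definitions, the inclusion $\subseteq$ in (ii) is immediate, and $\supseteq$ uses regularity of $\kappa$ exactly once to find a single cardinal $\nu<\kappa$ dominating all the witnessing cardinals $\nu_i$. The only difference is presentational: you state and prove the monotonicity ${\mathbf P}^{\mu}_{\gamma}\subseteq{\mathbf P}^{\nu}_{\gamma}$ for $\mu\le\nu$ as an explicit lemma, whereas the paper uses it tacitly ("It follows from this inequality that \dots").
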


\begin{proof} (i) is obvious as ${\mathbf \Pi}^{\kappa,0}_{1}$ and each family ${\mathbf P}^{\nu}_{0}$, for cardinals $\nu<\kappa$, consist of exactly the closed sets. 

The inclusion $\subseteq$ in (ii) is easy to see and does not use the assumption that $\kappa$ is regular. Take $A$ in ${\mathbf \Pi}^{\kappa,0}_{1+\xi}$. Then $A$ is in 
${\mathbf P}^\nu_\xi$ for some $\nu<\kappa$ and $\xi<\nu^+$. So $A$ is the intersection of $\leq\nu$ complements of sets in 
$\bigcup_{\gamma<\xi}{\mathbf P}^\nu_\gamma$, and therefore it is the intersections of $<\kappa$ many complements of sets in 
$\bigcup_{\gamma<\xi} {\mathbf \Pi}^{\kappa,0}_{1+\gamma}$.

To prove the inclusion $\supseteq$ in (ii), fix $A$, for whichthere are a cardinal $\mu<\kappa$ and sets $B_\eta$ for $\eta<\mu$ such that  
\[
A= \bigcap_{\eta<\mu} (X\setminus B_{\eta})
\]
with $B_\eta$ in ${\mathbf \Pi}^{\kappa,0}_{1+\gamma_\eta}$ for some $\gamma_\eta<\xi$, for all $\eta<\mu$. By definition of 
${\mathbf \Pi}^{\kappa,0}_{1+\gamma_\eta}$, 
there exist cardinals $\mu_\eta<\kappa$, for $\eta<\mu$, such that $\gamma_\eta<\mu_\eta^+$ and $B_\eta$ is in 
${\mathbf P}^{\mu_\eta}_{\gamma_\eta}$. Since $\kappa$ is regular and $\mu<\kappa$, there exists a cardinal $\nu<\kappa$ such that $\mu_\eta\leq\nu$ for 
all $\eta<\mu$. It follows from this inequality that $\gamma_\eta<\nu^+$ and $B_\eta$ is in ${\mathbf P}^{\nu}_{\gamma_\eta}$
for all $\eta<\mu$. We can increase $\nu$ so that $\xi<\nu^+$. It follows that $A$ is in  ${\mathbf P}^{\nu}_{\xi}$, so in ${\mathbf \Pi}^{\kappa,0}_{1+\xi}$ as required. 
\end{proof}

Similarly, we define, for a cardinal $\nu>0$ and an ordinal $\xi<\nu^+$, 
\[
{\mathbf S}^\nu_\xi.
\] 
Let 
${\mathbf S}^\nu_0$ 
be the family of all open sets and, for $0<\xi<\nu^+$, let 
${\mathbf S}^\nu_\xi$
consist of unions of subfamilies of cardinality $\leq \nu$ of complements of sets in $\bigcup_{\gamma<\xi}{\mathbf S}^\nu_\gamma$. 

We make the following observation that is easy to prove by induction on $\xi$. 

\begin{observation}\label{O:comp} Let $\nu<\kappa$ be a cardinal and let $\xi<\nu^+$ be an ordinal. 
\begin{enumerate} 
\item[(i)]  ${\mathbf S}^\nu_\xi$ consists of complements of sets in ${\mathbf P}^\nu_\xi$. 

\item[(ii)] ${\mathbf S}^{\nu}_{\xi} \subseteq {\mathbf P}^{\nu}_{\xi+1}$ and ${\mathbf P}^{\nu}_{\xi} \subseteq {\mathbf S}^{\nu}_{\xi+1}$. 
\end{enumerate}
\end{observation}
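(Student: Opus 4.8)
The plan is to establish part (i) by a straightforward transfinite induction on $\xi<\nu^+$ and then to obtain part (ii) as an immediate corollary of (i). Throughout, the key mechanism is De Morgan's laws: complementation is an involution that converts a $\le\nu$-indexed union into a $\le\nu$-indexed intersection, and vice versa.

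For the base case $\xi=0$ of (i), one simply observes that ${\mathbf S}^\nu_0$ consists of the open sets, ${\mathbf P}^\nu_0$ of the closed sets, and that complementation is a bijection between these two families. For the remaining case $0<\xi<\nu^+$, I would assume inductively that ${\mathbf S}^\nu_\gamma=\{X\setminus P : P\in{\mathbf P}^\nu_\gamma\}$ for every $\gamma<\xi$; taking the union over $\gamma<\xi$, this says exactly that the complements of the members of $\bigcup_{\gamma<\xi}{\mathbf S}^\nu_\gamma$ form precisely the family $\bigcup_{\gamma<\xi}{\mathbf P}^\nu_\gamma$. Now $A\in{\mathbf S}^\nu_\xi$ means that $A$ is a union of a subfamily of cardinality $\le\nu$ of complements of sets in $\bigcup_{\gamma<\xi}{\mathbf S}^\nu_\gamma$, which by the induction hypothesis is the same as saying $A$ is a union of $\le\nu$ many sets from $\bigcup_{\gamma<\xi}{\mathbf P}^\nu_\gamma$; by De Morgan this is equivalent to $X\setminus A$ being an intersection of $\le\nu$ many complements of sets in $\bigcup_{\gamma<\xi}{\mathbf P}^\nu_\gamma$, i.e. to $X\setminus A\in{\mathbf P}^\nu_\xi$. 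This closes the induction and proves (i).

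Part (ii) then follows by double complementation. If $A\in{\mathbf S}^\nu_\xi$, then $X\setminus A\in{\mathbf P}^\nu_\xi$ by (i), so $A=X\setminus(X\setminus A)$ is the intersection of the one-element family $\{A\}$ of complements of sets in $\bigcup_{\gamma<\xi+1}{\mathbf P}^\nu_\gamma$, whence $A\in{\mathbf P}^\nu_{\xi+1}$; the inclusion ${\mathbf P}^\nu_\xi\subseteq{\mathbf S}^\nu_{\xi+1}$ is obtained symmetrically, reading (i) in the form that every set in ${\mathbf P}^\nu_\xi$ is the complement of a set in ${\mathbf S}^\nu_\xi$. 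Here one uses that singleton families have cardinality $1\le\nu$, so that they are admissible in forming ${\mathbf P}^\nu_{\xi+1}$ and ${\mathbf S}^\nu_{\xi+1}$, and that $\xi+1<\nu^+$ (which holds since $\nu^+$ is a limit ordinal), so that these level-$(\xi+1)$ classes are in fact defined.

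I do not expect any genuine obstacle here: the argument is pure bookkeeping. The only points worth a second glance are that complementation matches up the $\le\nu$-indexed unions defining ${\mathbf S}^\nu_\xi$ with the $\le\nu$-indexed intersections defining ${\mathbf P}^\nu_\xi$ (so that the cardinality bounds in the two definitions propagate correctly through the induction), that singleton families are permitted in forming the classes at level $\xi+1$, and that $\xi+1$ stays below $\nu^+$.
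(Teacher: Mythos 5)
Your proof is correct and follows exactly the route the paper intends: the paper omits the argument, noting only that the observation ``is easy to prove by induction on $\xi$,'' and your De Morgan induction for (i) followed by double complementation for (ii) is precisely that argument. The only cosmetic caveat is your aside that $\nu^+$ is a limit ordinal, which presumes $\nu$ infinite; for finite $\nu$ one should simply read (ii) as asserted only when $\xi+1<\nu^+$, so the level-$(\xi+1)$ classes are defined.
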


For an ordinal $\xi<\kappa$, define
\[
{\mathbf \Sigma}^{\kappa,0}_{1+\xi} = \bigcup \{ {\mathbf S}^\nu_\xi\mid \nu \hbox{ a cardinal},\, \nu<\kappa,\, \xi<\nu^+\}
\]
The proof of the following observation is analogous to the proof of Observation~\ref{O:borep} and we omit it.

\begin{observation}\label{O:bores} 
\begin{enumerate} 
\item[(i)] ${\mathbf \Sigma}^{\kappa,0}_1$ is the class of all open sets. 

\item[(ii)] If $\kappa$ is regular, then, 
for $0<\xi<\kappa$, ${\mathbf \Sigma}^{\kappa,0}_{1+\xi}$ consists of all unions of $<\kappa$ many complements of sets in 
$\bigcup_{\gamma<\xi} {\mathbf \Sigma}^{\kappa,0}_{1+\gamma}$. 
\end{enumerate} 
\end{observation}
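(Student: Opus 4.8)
The plan is to avoid rerunning the induction of Observation~\ref{O:borep} and instead reduce both parts to facts already in hand by exploiting level-wise duality. Part (i) I would dispatch immediately and without any regularity hypothesis: by definition ${\mathbf \Sigma}^{\kappa,0}_1=\bigcup\{{\mathbf S}^\nu_0\mid \nu<\kappa\}$, the side condition $0<\nu^+$ being vacuous, and each ${\mathbf S}^\nu_0$ is by fiat the family of all open sets, so ${\mathbf \Sigma}^{\kappa,0}_1$ is exactly that family, just as in Observation~\ref{O:borep}(i).

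For the core of the argument I would first record the global duality between the $\mathbf\Sigma$ and $\mathbf\Pi$ classes: for every ordinal $\xi<\kappa$,
\[
{\mathbf \Sigma}^{\kappa,0}_{1+\xi}=\{X\setminus B\mid B\in {\mathbf \Pi}^{\kappa,0}_{1+\xi}\}.
\]
This follows by unwinding the definitions together with Observation~\ref{O:comp}(i): for a fixed cardinal $\nu<\kappa$ with $\xi<\nu^+$ one has ${\mathbf S}^\nu_\xi=\{X\setminus B\mid B\in {\mathbf P}^\nu_\xi\}$, and since complementation is an involution it commutes with the union over exactly those $\nu$ occurring in the definitions of ${\mathbf \Sigma}^{\kappa,0}_{1+\xi}$ and ${\mathbf \Pi}^{\kappa,0}_{1+\xi}$. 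Thus $A\in {\mathbf \Sigma}^{\kappa,0}_{1+\xi}$ iff $A\in {\mathbf S}^\nu_\xi$ for some admissible $\nu$ iff $X\setminus A\in {\mathbf P}^\nu_\xi$ for some admissible $\nu$ iff $X\setminus A\in {\mathbf \Pi}^{\kappa,0}_{1+\xi}$. Note this uses nothing about regularity and holds for singular $\kappa$ as well.

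Granting the displayed identity, part (ii) becomes a purely formal dualization of Observation~\ref{O:borep}(ii). Assume $\kappa$ is regular and $0<\xi<\kappa$. A set $A$ belongs to ${\mathbf \Sigma}^{\kappa,0}_{1+\xi}$ iff $X\setminus A$ belongs to ${\mathbf \Pi}^{\kappa,0}_{1+\xi}$, which by Observation~\ref{O:borep}(ii) holds iff $X\setminus A=\bigcap_{i\in I}(X\setminus C_i)$ for some index set $I$ with $|I|<\kappa$ and sets $C_i\in\bigcup_{\gamma<\xi}{\mathbf \Pi}^{\kappa,0}_{1+\gamma}$. Taking complements, this is equivalent to $A=\bigcup_{i\in I}C_i$ with $|I|<\kappa$; and applying the global duality at the levels $\gamma<\xi$, each of which is $<\kappa$, the condition $C_i\in{\mathbf \Pi}^{\kappa,0}_{1+\gamma}$ is equivalent to $C_i$ being the complement of a set in ${\mathbf \Sigma}^{\kappa,0}_{1+\gamma}$. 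Hence $A\in{\mathbf \Sigma}^{\kappa,0}_{1+\xi}$ iff $A$ is a union of $<\kappa$ many complements of sets in $\bigcup_{\gamma<\xi}{\mathbf \Sigma}^{\kappa,0}_{1+\gamma}$, which is the assertion of (ii). The two inclusions correspond, respectively, to the two inclusions of Observation~\ref{O:borep}(ii).

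Because all the substance has already been absorbed into Observation~\ref{O:borep}(ii), I do not expect a serious obstacle. The one point that genuinely needs care — and the reason ``$\kappa$ regular'' cannot be dropped in (ii) — is precisely the step inside Observation~\ref{O:borep}(ii) where regularity is invoked to dominate a family of $<\kappa$ many cardinals below $\kappa$ by a single such cardinal; the duality reduction inherits this without introducing any new difficulty. If one preferred a self-contained proof, one could instead mirror Observation~\ref{O:borep} verbatim, replacing $\bigcap$ by $\bigcup$ and interchanging the roles of ${\mathbf P}$ and ${\mathbf S}$, but that route additionally calls for the monotonicity ${\mathbf S}^\mu_\gamma\subseteq{\mathbf S}^\nu_\gamma$ for $\mu\le\nu$ (an easy side induction on $\gamma$), which the duality argument sidesteps entirely.
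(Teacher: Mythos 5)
Your proof is correct, but it takes a different route from the paper's. The paper omits the proof of Observation~\ref{O:bores} with the remark that it is ``analogous to the proof of Observation~\ref{O:borep}'', i.e.\ the intended argument is a mirror-image induction with $\bigcap$ replaced by $\bigcup$ and the roles of ${\mathbf P}^\nu_\xi$ and ${\mathbf S}^\nu_\xi$ interchanged. You instead \emph{reduce} the statement to Observation~\ref{O:borep} via the level-wise duality of Observation~\ref{O:comp}(i), first upgrading it to the global identity ${\mathbf \Sigma}^{\kappa,0}_{1+\xi}=\{X\setminus B\mid B\in {\mathbf \Pi}^{\kappa,0}_{1+\xi}\}$ (valid because the two classes are unions of the dual families ${\mathbf S}^\nu_\xi$, ${\mathbf P}^\nu_\xi$ over the \emph{same} set of admissible cardinals $\nu$, so complementation commutes with the union), and then dualizing the biconditional of Observation~\ref{O:borep}(ii) term by term. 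Every step is an equivalence, so both inclusions of (ii) come out at once, and part (i) is immediate since each ${\mathbf S}^\nu_0$ is by definition the family of open sets. What your route buys is economy and transparency: it isolates regularity as entering only through the already-proved $\supseteq$ direction of Observation~\ref{O:borep}(ii), and it avoids re-deriving the monotonicity ${\mathbf S}^\mu_\gamma\subseteq{\mathbf S}^\nu_\gamma$ for $\mu\le\nu$ that a verbatim mirror of the paper's induction would quietly need (just as the paper's own proof of Observation~\ref{O:borep}(ii) quietly uses ${\mathbf P}^{\mu_\eta}_{\gamma_\eta}\subseteq{\mathbf P}^{\nu}_{\gamma_\eta}$). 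The cost is a dependence on Observation~\ref{O:comp}(i), which the paper states but leaves as an easy induction; since that induction is genuinely routine, this is a fair trade.
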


Below, we have the expected result on the relationship between $\kappa$-Borel sets and the $\mathbf \Pi$ and $\mathbf \Sigma$ classes. 
It is an immediate consequence of Observation~\ref{O:comp}. 

\begin{observation}\label{O:alg} 
We have 
\begin{equation}\notag
\bigcup_{\xi<\kappa} {\mathbf \Sigma}^{\kappa,0}_{1+\xi} = \bigcup_{\xi<\kappa} {\mathbf \Pi}^{\kappa,0}_{1+\xi}, 
\end{equation}
and the family above is the family of all $\kappa$-Borel sets. 
\end{observation}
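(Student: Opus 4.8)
The plan is to establish the displayed equality first and then identify the common family with the $\kappa$-Borel sets; both halves run off Observation~\ref{O:comp}, the only non-formal point being a cardinal-arithmetic bookkeeping step of the same kind as in the proof of Observation~\ref{O:borep}. For the equality I would prove the two inclusions symmetrically. Take $A$ in $\bigcup_{\xi<\kappa}{\mathbf\Sigma}^{\kappa,0}_{1+\xi}$, say $A\in{\mathbf S}^\nu_\xi$ for a cardinal $\nu<\kappa$ with $\xi<\nu^+$. Enlarging $\nu$ if necessary---the classes ${\mathbf S}^\nu_\xi$ and ${\mathbf P}^\nu_\xi$ are plainly monotone in the cardinal parameter $\nu$---I may assume $\xi+1<\nu^+$, so that ${\mathbf S}^\nu_\xi\subseteq{\mathbf P}^\nu_{\xi+1}$ by Observation~\ref{O:comp}(ii). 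Since $\kappa$ is an infinite cardinal, hence a limit ordinal, $\xi+1<\kappa$, and therefore $A\in{\mathbf P}^\nu_{\xi+1}\subseteq{\mathbf\Pi}^{\kappa,0}_{1+(\xi+1)}\subseteq\bigcup_{\xi<\kappa}{\mathbf\Pi}^{\kappa,0}_{1+\xi}$. The reverse inclusion uses ${\mathbf P}^\nu_\xi\subseteq{\mathbf S}^\nu_{\xi+1}$ in exactly the same way. I write $\mathcal B$ for the common family.

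To see that $\mathcal B$ consists of $\kappa$-Borel sets it is enough, by the equality just proved, to show ${\mathbf P}^\nu_\xi\subseteq{\rm bor}(\nu)$ for every cardinal $\nu<\kappa$ and every $\xi<\nu^+$; this I would prove by induction on $\xi$, the base case being the inclusion of the closed sets in ${\rm bor}(\nu)$ and the inductive step using only that ${\rm bor}(\nu)$ is closed under complements and under intersections of $\leq\nu$ of its members. Unioning over $\nu<\kappa$ then gives $\mathcal B\subseteq\bigcup\{{\rm bor}(\nu)\mid\nu<\kappa\}$, the family of $\kappa$-Borel sets.

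For the reverse containment I would fix a cardinal $\nu$ with $0<\nu<\kappa$, set $\mathcal P_\nu:=\bigcup_{\xi<\nu^+}{\mathbf P}^\nu_\xi$ (which is contained in $\mathcal B$), and verify that $\mathcal P_\nu$ contains all closed sets and is closed under complementation and under unions of $\leq\nu$ of its members; by the minimality of ${\rm bor}(\nu)$ this gives ${\rm bor}(\nu)\subseteq\mathcal P_\nu\subseteq\mathcal B$, and unioning over $\nu<\kappa$ completes the proof. Closure of $\mathcal P_\nu$ under complementation is Observation~\ref{O:comp}(i) followed by (ii). For closure under unions of size $\mu\leq\nu$, given $A_\eta\in{\mathbf P}^\nu_{\xi_\eta}$ for $\eta<\mu$, I would pass to $A_\eta\in{\mathbf S}^\nu_{\xi_\eta+1}$ by Observation~\ref{O:comp}(ii), set $\xi^*=\sup_{\eta<\mu}(\xi_\eta+1)$, and observe that $\bigcup_{\eta<\mu}A_\eta$ lies in ${\mathbf S}^\nu_{\xi^*}$, hence in ${\mathbf P}^\nu_{\xi^*+1}\subseteq\mathcal P_\nu$. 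The one genuinely non-formal point---and the main obstacle---is that $\xi^*<\nu^+$, so that ${\mathbf S}^\nu_{\xi^*}$ is a legitimate class: this is precisely where the regularity of $\nu^+$ enters (together with $\mu\cdot\nu=\nu$ for infinite $\nu$, while for finite $\nu$ everything is immediate), and it is the same device used in the proof of Observation~\ref{O:borep}(ii). With that, the statement is indeed a direct consequence of Observation~\ref{O:comp}.
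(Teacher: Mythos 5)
Your overall route is the intended one---the paper derives this observation directly from Observation~\ref{O:comp} with exactly the kind of cardinal bookkeeping you describe---and both the equality of the two unions and the inclusion of the common family $\mathcal B$ into the $\kappa$-Borel sets are argued correctly. The step that fails as written is in the reverse containment, and it fails precisely in the finite-cardinal case that your parenthetical dismisses as ``immediate.'' For a finite cardinal $\nu$ the cardinal successor $\nu^+=\nu+1$ is a \emph{successor} ordinal, so the top level $\xi=\nu$ of $\mathcal P_\nu=\bigcup_{\xi<\nu^+}{\mathbf P}^\nu_\xi$ has nowhere to go: for $A\in{\mathbf P}^\nu_\nu$ the complement lies in ${\mathbf S}^\nu_\nu$, and the inclusion ${\mathbf S}^\nu_\nu\subseteq{\mathbf P}^\nu_{\nu+1}$ is unavailable because $\nu+1\not<\nu^+$. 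This is not a formal quibble: $\mathcal P_2$, for instance, consists exactly of the open sets, the closed sets, and the locally closed sets, and the complement of a locally closed set need not be of any of these three kinds (take $U\cup\{0\}$ in $\mathbb R$ with $U=\bigcup_n\big(\tfrac{1}{2n+1},\tfrac{1}{2n}\big)$, which is the complement of a locally closed set but is not open in its own closure). So $\mathcal P_\nu$ is in general \emph{not} closed under complementation for finite $\nu$, and the appeal to minimality of ${\rm bor}(\nu)$ collapses. The same boundary effect hits your union step: $\xi^*=\sup_{\eta<\mu}(\xi_\eta+1)$ can equal $\nu+1=\nu^+$, and the resulting union involves $\mu\cdot\nu$ complements, which exceeds $\nu$ for finite $\nu\geq 2$. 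This case genuinely matters for the statement, since when $\kappa=\omega$ (a case the paper singles out) every $\nu<\kappa$ is finite.

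The repair is the device you already deploy in the first half of your proof: let the cardinal parameter grow, which is always possible inside the infinite cardinal $\kappa$. Instead of proving ${\rm bor}(\nu)\subseteq\mathcal P_\nu$ for a fixed $\nu$, prove ${\rm bor}(\nu)\subseteq\mathcal B$ by verifying that $\mathcal B=\bigcup_{\nu'<\kappa}\mathcal P_{\nu'}$ itself contains the closed sets and is closed under complements and under unions of $\leq\nu$ of its members: given $\leq\nu$ sets lying in various classes ${\mathbf P}^{\nu'}_{\xi_\eta}$, use the monotonicity in the superscript (which you have already noted) to move them all into ${\mathbf P}^{\nu''}_{\xi_\eta}$ for a single cardinal $\nu''<\kappa$ chosen large enough that $\mu\cdot\nu''=\nu''$ (or, for finite data, simply large enough to absorb the count) and $\xi^*+1<(\nu'')^+$. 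For infinite $\nu$ one may take $\nu''=\nu$ and your regularity argument for $\nu^+$ goes through verbatim; for finite $\nu$ one takes a sufficiently large finite $\nu''<\kappa$. With that adjustment the argument is complete and is, in substance, the proof the paper has in mind.
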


Finally, we connect $\kappa$-Borelness with $\kappa$-meagerness. 
A set $A$ is said to have the {\bf $\kappa$-Baire property} if $A= (U\setminus M_1)\cup M_2$, where $U$ is open and $M_1$ and $M_2$ are $\kappa$-meager. 

\begin{observation}\label{O:bpr} Each $\kappa$-Borel set has the $\kappa$-Baire property with respect to each topology $\tau'$ with $\tau\subseteq \tau'$. 
\end{observation}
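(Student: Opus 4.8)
The plan is to reduce the statement to a closure property of a suitable family of ``almost open'' sets. By definition every $\kappa$-Borel set lies in ${\rm bor}(\nu)$ for some cardinal $\nu<\kappa$, and ${\rm bor}(\nu)$ is the smallest family of subsets of $X$ containing all $\tau$-closed sets and closed under complements and under unions of families of cardinality $\le\nu$. Hence it suffices to fix such a $\nu$ and exhibit a family $\mathcal B\supseteq{\rm bor}(\nu)$ all of whose members have the $\kappa$-Baire property with respect to $\tau'$. The case $\kappa=\omega$ is degenerate and I would dispose of it first: then the $\omega$-meager sets are exactly the $\tau'$-nowhere dense sets, the $\omega$-Borel sets form the Boolean algebra generated by the $\tau$-closed sets, and the family of sets that differ from a $\tau'$-open set by a $\tau'$-nowhere dense set contains every $\tau$-closed set (such a set $F$ is $\tau'$-closed since $X\setminus F$ is $\tau$-open, hence $\tau'$-open, and $F\triangle{\rm int}_{\tau'}(F)=F\setminus{\rm int}_{\tau'}(F)$ is $\tau'$-closed with empty $\tau'$-interior) and is readily checked to be a Boolean algebra. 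So assume $\kappa>\omega$ and pick $\nu$ with $\omega\le\nu<\kappa$ and the given $\kappa$-Borel set in ${\rm bor}(\nu)$.

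With $\nu$ infinite, let $\mathcal B$ be the family of all $A\subseteq X$ for which $A\triangle U$ is a union of at most $\nu$ many $\tau'$-nowhere dense sets for some $\tau'$-open $U$. Every $A\in\mathcal B$ has the $\kappa$-Baire property with respect to $\tau'$: a union of $\le\nu<\kappa$ nowhere dense sets is $\kappa$-meager, and if $A\triangle U$ is $\kappa$-meager then $A=(U\setminus M_1)\cup M_2$ with $M_1=U\setminus A$ and $M_2=A\setminus U$, both $\kappa$-meager as subsets of $A\triangle U$. It remains to check that $\mathcal B$ contains the $\tau$-closed sets and is closed under complements and under unions of families of cardinality $\le\nu$. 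For a $\tau$-closed, hence $\tau'$-closed, set $F$ one takes $U={\rm int}_{\tau'}(F)$ and uses that $F\triangle U=F\setminus{\rm int}_{\tau'}(F)$ is $\tau'$-nowhere dense. For unions, if $A_i\triangle U_i$ is a union of $\le\nu$ nowhere dense sets for $i$ in an index set of size $\le\nu$, then $\big(\bigcup_i A_i\big)\triangle\big(\bigcup_i U_i\big)\subseteq\bigcup_i(A_i\triangle U_i)$ is contained in a union of $\nu\cdot\nu=\nu$ nowhere dense sets, hence is itself such a union. For complements, given $A\in\mathcal B$ witnessed by $U$, the set $X\setminus U$ is $\tau'$-closed, so differs from ${\rm int}_{\tau'}(X\setminus U)$ by a $\tau'$-nowhere dense set $N$, and $(X\setminus A)\triangle{\rm int}_{\tau'}(X\setminus U)\subseteq(A\triangle U)\cup N$ is again contained in a union of $\le\nu$ nowhere dense sets. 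This gives ${\rm bor}(\nu)\subseteq\mathcal B$, and the Observation follows.

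The only point demanding care is the treatment of singular $\kappa$. The natural-looking route of proving directly that the sets with the $\kappa$-Baire property with respect to $\tau'$ are closed under unions of families of cardinality $<\kappa$ does not work in general: for singular $\kappa$ a union of ${\rm cf}(\kappa)$ many $\kappa$-meager sets need not be $\kappa$-meager. Confining the argument to a single ${\rm bor}(\nu)$ with $\nu<\kappa$, and keeping the meager ``error term'' inside the more restrictive class of unions of at most $\nu$ nowhere dense sets, sidesteps this, since that class is preserved by the operations that generate ${\rm bor}(\nu)$ precisely because $\nu\cdot\nu=\nu$ for infinite $\nu$. I expect this bookkeeping, rather than any topological subtlety, to be the only real obstacle.
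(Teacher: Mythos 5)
Your proof is correct and follows essentially the same route as the paper: fix a single cardinal $\nu<\kappa$ witnessing $\kappa$-Borelness, and show that the family of sets that are $\tau'$-open modulo a union of $\leq\nu$ $\tau'$-nowhere dense sets contains the closed sets and is closed under complements and $\leq\nu$-fold unions. The only cosmetic differences are that you work with ${\rm bor}(\nu)$ rather than ${\mathbf S}^\nu_\xi$ and normalize $\nu$ to be infinite by treating $\kappa=\omega$ separately, where the paper instead splits the union-of-unions step into the cases $\nu<\omega$ and $\nu\geq\omega$.
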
 

\begin{proof} If $A$ is $\kappa$-Borel, then, by Observation~\ref{O:alg}, $A$ is in ${\mathbf S}^\nu_\xi$ for some cardinal $\nu<\kappa$ and some ordinal $\xi<\nu^+$. 
Fix these $\nu$ and $\xi$. Consider the family of sets of the form $(U\setminus M_1)\cup M_2$, where $U$ is $\tau'$-open and $M_1,\, M_2$ are unions of $\leq\nu$ $\tau'$-nowhere dense sets. Of course, each such set has the $\kappa$-Baire property with respect to $\tau'$. So, it will suffice to show that $A$ is of this form. If $\xi=0$, this is clear since all $\tau$-open sets are $\tau'$-open. To prove it for $0<\xi<\nu^+$, it suffices to show that sets of the above form are closed under complements and unions of families of cardinality $\leq\nu$. This is done by the standard argument using the facts that for each $\tau'$-open set $U$, ${\rm cl}_{\tau'}(U)\setminus U$ is $\tau'$-nowhere dense and that a union of $\leq\nu$ sets that are unions of $\leq\nu$ nowhere dense sets is a union of $\leq\nu$ nowhere dense sets. Observe only that proving this last assertion splits into two cases--- $\nu<\omega$ and $\omega\leq \nu$. 
\end{proof}

\subsection{Semiregularity and $\pi$-semiregularity} 
We recall the following notion due to Stone \cite[Chapter III, Definition 19]{St}.
A topology is called {\bf semiregular} if it is generated by its regular open sets, that is, by sets that are interiors of their closures. See \cite{En} for more information on this notion.
To compare semiregularity with regularity of a topology, note that a topology is regular if and only if, for each open set $U$, there exists a family $\mathcal F$ of closed sets such that 
\[
U= \bigcup \{ F\mid F\in {\mathcal F}\} = \bigcup \{ {\rm int}(F)\mid F\in {\mathcal F}\}. 
\]
If we recall that a set is regular open precisely when it is equal to the interior of a closed set, it becomes clear that a topology is semiregular if and only if, for each open set $U$, there exists a family $\mathcal F$ of closed sets such that 
\[
U= \bigcup\{ {\rm int}(F)\mid F\in {\mathcal F}\}. 
\]
In particular, it follows that each regular topological space is semiregular. 

A topology is called {\bf $\pi$-semiregular} if each nonempty open set contains a non\-empty regular open set.

\section{Stabilization of filtrations and descriptive complexity}\label{S:stdes}

\subsection{Theorems on stabilization of filtrations}

Here is our first main result. 
We write $(\tau_\xi)_{\xi\leq\rho}$ for $(\tau_\xi)_{\xi<\rho+1}$.

\begin{theorem}\label{T:stab2}
Let $\sigma\subseteq \tau$ be topologies with $\tau$ semiregular. Let $\kappa$ be an infinite cardinal such that $\tau$ is $\kappa$-Baire, and 
let $1\leq \alpha\leq \kappa$ be an ordinal. 
Assume that $\tau$ has a neighborhood basis consisting of sets in $\bigcup_{\xi<\alpha}{\mathbf \Pi}^{\kappa,0}_{1+\xi}$ with respect to $\sigma$. 
\begin{enumerate}
\item[(i)] If $\alpha$ is a successor and $(\tau_\xi)_{\xi\leq\alpha}$ is a filtration from $\sigma$ to $\tau$, then $\tau= \tau_\alpha$. 

\item[(ii)] If $\alpha$ is limit and $(\tau_\xi)_{\xi<\alpha}$ is a filtration from $\sigma$ to $\tau$, then $\tau=\bigvee_{\xi<\alpha}\tau_\xi$. 
\end{enumerate} 
\end{theorem}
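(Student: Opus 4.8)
The plan is to show that each $\tau$-open set $U$ is $\tau_\alpha$-open (in case (i)) or $\bigvee_{\xi<\alpha}\tau_\xi$-open (in case (ii)); since all topologies in the filtration are contained in $\tau$, this gives equality. Because $\tau$ has a neighborhood basis of sets in $\bigcup_{\xi<\alpha}{\mathbf\Pi}^{\kappa,0}_{1+\xi}$ with respect to $\sigma$, and because $\tau$ is semiregular, it is enough to fix such a basic set $A$ with $x\in{\rm int}_\tau(A)$ and produce a $\tau_\alpha$-open (resp.\ $\bigvee_{\xi<\alpha}\tau_\xi$-open) set $V$ with $x\in V\subseteq A$; intersecting with a regular open $\tau$-neighborhood of $x$ inside ${\rm int}_\tau(A)$ and taking unions over $x$ then rebuilds $U$. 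So the heart of the matter is: \emph{if $A$ is in ${\mathbf\Pi}^{\kappa,0}_{1+\xi}$ with respect to $\sigma$ for some $\xi<\alpha$, then ${\rm int}_\tau(A)$ is open in the appropriate intermediate topology.}

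I would prove this by induction on $\xi<\alpha$, tracking the statement: for every set $A$ that is $\sigma$-${\mathbf\Pi}^{\kappa,0}_{1+\xi}$, the $\tau$-interior ${\rm int}_\tau(A)$ is $\tau_{\xi+1}$-open (and, summing, $\bigvee_{\gamma\le\xi}\tau_{\gamma+1}$-open). The base case $\xi=0$ is exactly the filtration condition \eqref{E:intap2}: a $\sigma$-closed set $F=\tau_0$-closed has ${\rm int}_{\tau_1}(F)={\rm int}_\tau(F)$, so ${\rm int}_\tau(F)$ is $\tau_1$-open. For the successor/limit inductive step, write $A=\bigcap_{\eta<\mu}(X\setminus B_\eta)$ with $\mu<\kappa$ and each $B_\eta$ in $\sigma$-${\mathbf\Pi}^{\kappa,0}_{1+\gamma_\eta}$, $\gamma_\eta<\xi$. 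Let $\theta=\bigvee_{\gamma<\xi}\tau_{\gamma+1}$, a topology contained in $\tau$; by the inductive hypothesis, each ${\rm int}_\tau(B_\eta)$ is $\theta$-open, hence $C_\eta:={\rm cl}_\theta\big(X\setminus{\rm cl}_\tau(B_\eta)\big)$ and related sets are manageable. The key claim to establish is that ${\rm int}_\tau(A)$ differs from a $\theta$-Borel-ish set by a $\tau$-meager set, and then invoke $\kappa$-Baireness of $\tau$ together with Observation~\ref{O:bpr} to pin ${\rm int}_\tau(A)$ down as open in the next topology $\tau_{\xi+1}$, using the filtration condition \eqref{E:intap2} one more time (applied to a $\theta$-closed, hence $\tau_\gamma$-closed for $\gamma$ just below $\xi+1$, set whose interior we need).

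The main obstacle, I expect, is the passage through a \emph{single} intersection of $<\kappa$ complements when the $\gamma_\eta$ are unbounded below a limit $\xi$: the natural witness topology is $\bigvee_{\gamma<\xi}\tau_{\gamma+1}$, and one must check this is still an admissible ``$\tau_\zeta$ for some $\zeta<\alpha$'' with respect to which \eqref{E:intap2} can be applied — this is where the hypothesis $\alpha\le\kappa$ and the fact that the filtration is indexed up to $\alpha$ get used, and where the interplay with $\kappa$-Baireness is essential (a $\kappa$-meager error set must be negligible, which forces the bound $\mu<\kappa$ to matter). Concretely, the delicate computation is showing ${\rm int}_\tau(A)=\bigcap_\eta {\rm int}_\tau(X\setminus B_\eta)$ up to a $\kappa$-meager set, using that ${\rm cl}_\tau(B_\eta)\setminus{\rm int}_\tau\!\big({\rm cl}_\tau(B_\eta)\big)$-type remainders are $\tau$-nowhere dense and there are only $<\kappa$ of them; semiregularity of $\tau$ is what lets one replace $X\setminus B_\eta$ by the regular open set ${\rm int}_\tau({\rm cl}_\tau(X\setminus B_\eta))$ without changing the interior of the intersection modulo meager. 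Once that identity is in hand, $\kappa$-Baireness upgrades ``open modulo $\kappa$-meager'' to genuinely the right interior, and the filtration axiom closes the induction.
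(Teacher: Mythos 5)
There is a genuine gap in the inductive step, and more fundamentally the inductive hypothesis you chose cannot carry the argument. You propose to prove, by induction on $\xi$, that for every $A$ in ${\mathbf \Pi}^{\kappa,0}_{1+\xi}$ with respect to $\sigma$, the set ${\rm int}_\tau(A)$ is $\tau_{\xi+1}$-open, and to deduce the inductive step by writing $A=\bigcap_{\eta<\mu}(X\setminus B_\eta)$ and invoking the hypothesis for the $B_\eta$. But knowing that ${\rm int}_\tau(B_\eta)$ is open in an intermediate topology gives essentially no control over $X\setminus B_\eta$ or over the intersection: to compute ${\rm int}_\tau\big(\bigcap_\eta(X\setminus B_\eta)\big)$ you need to know that each $B_\eta$ is \emph{close to a closed set of an intermediate topology}, not that its $\tau$-interior is open in one. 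The paper's Lemma~\ref{L:slal} supplies exactly this: each $A\in{\mathbf P}^\nu_\xi$ differs from the $\tau_\xi$-closed set $c_\xi(A)$ by errors that are ``slight'' in an asymmetric way ($A\setminus c_\xi(A)$ is locally slight, $c_\xi(A)\setminus A$ is globally slight), and this asymmetry is what makes the induction survive complementation. Nothing in your plan plays this role; the sets $C_\eta={\rm cl}_\theta(X\setminus{\rm cl}_\tau(B_\eta))$ are introduced but never used to close the step.

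The second problem is the final upgrade. You claim that once ${\rm int}_\tau(A)$ is identified with an intermediate-topology open set ``modulo $\kappa$-meager,'' $\kappa$-Baireness pins it down ``as genuinely the right interior.'' Baireness does not do this: from ``$V$ is $\tau^\alpha$-open and $V\setminus A$ is $\kappa$-meager'' one cannot conclude that ${\rm int}_\tau(A)$ itself is $\tau^\alpha$-open, and the paper never proves the per-set statement you are inducting on (which, being local to a single $A$ and independent of the basis hypothesis, is almost certainly false in general). Instead, the paper settles for the weaker claim that every $\tau$-neighborhood of $x$ is $\kappa$-comeager in a $\tau^\alpha$-neighborhood of $x$ (via Lemma~\ref{L:last}, taking $F={\rm cl}_{\tau_\gamma}(A\cap B)$ and $V={\rm int}_{\tau_\beta}(F)\supseteq B$ by the filtration condition), and then uses this to show $U\subseteq{\rm int}_{\tau^\alpha}({\rm cl}_\tau(U))$ for every $\tau$-open $U$; semiregularity enters only at this last stage, to reduce to $\tau$-regular open sets $U={\rm int}_\tau({\rm cl}_\tau(U))$, for which the two inclusions combine to give $U={\rm int}_{\tau^\alpha}({\rm cl}_\tau(U))$. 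Your use of semiregularity (to replace $X\setminus B_\eta$ by a regular open set inside the induction) is not the role it plays, and your stated goal of producing $V$ with $x\in V\subseteq A$ exactly is stronger than what the method can deliver. To repair the proof you would need to replace your inductive statement with an approximation-by-$\tau_\xi$-closed-sets statement of the kind in Lemma~\ref{L:slal}, together with the bookkeeping of small errors that the $\xi_-$- and $\xi_+$-slight sets provide.
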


By Observation~\ref{O:bai}, there always exists an infinite cardinal $\kappa$ as in the assumptions of Theorem~\ref{T:stab2} above. In fact, by this observation, either each infinite cardinal works, or there is a largest infinite cardinal that works. 

Theorem~\ref{T:stab2} gives a common generalization and strengthening of the main results in \cite{So3}, namely \cite[Theorem~4.1 and Corollary~4.9]{So3}. Aside from replacing $\omega_1$ with an arbitrary infinite cardinal $\kappa$, it removes the assumptions of 
metrizability of the topologies $\tau_\xi$, $\xi<\alpha$, and of 
Baireness of the topology $\tau_\alpha$ in the filtration $(\tau_\xi)_{\xi\leq\alpha}$, and it weakens the assumption of regularity of $\tau$ to semiregularity.

We register one immediate corollary that is obtained by specializing Theorem~\ref{T:stab2} above to $\kappa=\alpha=\omega$ and $\kappa=\alpha=\omega_1$. We emphasize these cases as they are the most interesting ones from the point of view of descriptive set theory.

\begin{corollary}
Let $\sigma\subseteq \tau$ be topologies with $\tau$ semiregular. 
\begin{enumerate} 
\item[(i)] Assume that $\tau$ has a neighborhood basis included in the algebra of sets generated by $\sigma$. 
If $(\tau_\xi)_{\xi<\omega}$ is a filtration from $\sigma$ to $\tau$, then $\tau=\bigvee_{\xi<\omega}\tau_\xi$. 

\item[(ii)] Assume that $\tau$ is $\omega_1$-Baire and has a neighborhood basis included in the $\sigma$-algebra of sets generated by $\sigma$. 
If $(\tau_\xi)_{\xi<\omega_1}$ is a filtration from $\sigma$ to $\tau$, then $\tau=\bigvee_{\xi<\omega_1}\tau_\xi$. 
\end{enumerate} 
\end{corollary}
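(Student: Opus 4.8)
The plan is to read off both parts directly from Theorem~\ref{T:stab2}(ii), choosing the parameters as $\kappa=\alpha=\omega$ for part~(i) and $\kappa=\alpha=\omega_1$ for part~(ii). In each case $\alpha$ is a limit ordinal and the required relation $1\le\alpha\le\kappa$ holds (with equality), so once the hypotheses of Theorem~\ref{T:stab2} are verified, its clause~(ii) delivers $\tau=\bigvee_{\xi<\alpha}\tau_\xi$, which is precisely the stated conclusion in both cases. So the whole argument is an exercise in matching the hypotheses of the corollary to those of the theorem.

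First I would dispense with the $\kappa$-Baireness requirement: for $\kappa=\omega$ it is automatic, since every topology is $\omega$-Baire (Section~\ref{Su:bai}), and for $\kappa=\omega_1$ it is assumed outright in~(ii). Next I would translate the neighborhood-basis hypothesis. By Observation~\ref{O:alg}, $\bigcup_{\xi<\kappa}{\mathbf\Pi}^{\kappa,0}_{1+\xi}$ is exactly the family of all $\kappa$-Borel sets. For $\kappa=\omega$ this family is, by the remark following the definition of $\kappa$-Borel sets, the algebra of sets generated by the closed (equivalently, open) sets; for $\kappa=\omega_1$ it is the $\sigma$-algebra generated by the closed (equivalently, open) sets. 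Since in both instances $\alpha=\kappa$, the hypothesis ``$\tau$ has a neighborhood basis included in the algebra (resp.\ $\sigma$-algebra) generated by $\sigma$'' is literally the hypothesis ``$\tau$ has a neighborhood basis consisting of sets in $\bigcup_{\xi<\alpha}{\mathbf\Pi}^{\kappa,0}_{1+\xi}$ with respect to $\sigma$'' of Theorem~\ref{T:stab2}. With semiregularity of $\tau$ carried over verbatim, all hypotheses of Theorem~\ref{T:stab2}(ii) are in force, and the corollary follows.

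There is essentially no obstacle here; the only point demanding the slightest care is that ``$\bigcup_{\xi<\alpha}$'' with $\alpha=\kappa$ really exhausts the generated ($\sigma$-)algebra, and this is exactly what Observation~\ref{O:alg} supplies (together with the elementary identifications of $\omega$-Borel sets with the generated algebra and of $\omega_1$-Borel sets with the generated $\sigma$-algebra). No new estimate or construction is needed beyond assembling these facts and invoking Theorem~\ref{T:stab2}(ii).
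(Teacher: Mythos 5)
Your proof is correct and is exactly the paper's intended argument: the paper states this corollary as an immediate specialization of Theorem~\ref{T:stab2} to $\kappa=\alpha=\omega$ and $\kappa=\alpha=\omega_1$, and your verification of the hypotheses (automatic $\omega$-Baireness, the identification of $\bigcup_{\xi<\kappa}{\mathbf \Pi}^{\kappa,0}_{1+\xi}$ with the $\kappa$-Borel sets via Observation~\ref{O:alg}, and the identification of $\omega$-Borel and $\omega_1$-Borel sets with the generated algebra and $\sigma$-algebra) is precisely what is needed.
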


We also have a theorem for the ${\mathbf \Sigma}^{\kappa,0}_{1+\xi}$ classes.

\begin{theorem}\label{T:stab3}
Let $\sigma\subseteq \tau$ be topologies, 
with $\tau$ $\pi$-semiregular. Let $\kappa$ be an infinite cardinal such that $\tau$ is 
 $\kappa$-Baire, and let $1\leq \alpha\leq \kappa$ be an ordinal. Assume that $\tau$ has a neighborhood $\pi$-basis consisting of sets in $\bigcup_{\xi<\alpha}{\mathbf \Sigma}^{\kappa,0}_{1+\xi}$ with respect to $\sigma$. 

If $(\tau_\xi)_{\xi<\alpha}$ is a filtration from $\sigma$ to $\tau$, then 
each nonempty $\tau$-open set contains a nonempty $\big(\bigvee_{\xi<\alpha} \tau_\xi\big)$-open set. 
\end{theorem}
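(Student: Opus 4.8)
The goal is to show that under the $\pi$-basis hypothesis for $\bigcup_{\xi<\alpha}{\mathbf\Sigma}^{\kappa,0}_{1+\xi}$, every nonempty $\tau$-open set $V$ contains a nonempty $\big(\bigvee_{\xi<\alpha}\tau_\xi\big)$-open set. The strategy is to run an induction on $\alpha$ that tracks, for each level $\xi<\alpha$, the assertion ``every nonempty $\tau$-open set that happens to be $\tau$-open and admits a $\tau$-open subset lying in some ${\mathbf\Sigma}^{\kappa,0}_{1+\eta}$, $\eta\le\xi$, relative to $\sigma$, contains a nonempty $\tau_{\text{[something]}}$-open set.'' Concretely: given nonempty $\tau$-open $V$, use the $\pi$-basis to find a set $A\in{\mathbf\Sigma}^{\kappa,0}_{1+\xi}$ (relative to $\sigma$) for some $\xi<\alpha$ with $A\subseteq V$ and ${\rm int}_\tau(A)\neq\emptyset$. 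Now the engine is: $A$ is a union of $\le\nu$ many complements $X\setminus B_\gamma$ with each $B_\gamma\in{\mathbf P}^\nu_{\gamma_\gamma}$, $\gamma_\gamma<\xi$ (unwinding the ${\mathbf S}^\nu_\xi$ definition and Observation~\ref{O:comp}(i)); write $A=\bigcup_\zeta C_\zeta$ with each $C_\zeta$ the complement of a ${\mathbf\Pi}$-set of lower rank. Since ${\rm int}_\tau(A)\neq\emptyset$ and $\tau$ is $\kappa$-Baire with $\nu<\kappa$, some $C_\zeta$ is non-$\kappa$-meager inside ${\rm int}_\tau(A)$; in fact, by Observation~\ref{O:bpr} each $C_\zeta$ has the $\kappa$-Baire property with respect to $\tau$, so $C_\zeta$ agrees with a $\tau$-open set modulo a $\kappa$-meager set on some nonempty $\tau$-open piece $W\subseteq{\rm int}_\tau(A)\subseteq V$. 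Thus on $W$ the complement $C_\zeta = X\setminus B_\zeta$ is $\kappa$-comeager, meaning $B_\zeta\cap W$ is $\kappa$-meager.

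**Descending one rank.** The point of getting $B_\zeta\cap W$ $\kappa$-meager is to descend: $B_\zeta\in{\mathbf P}^\nu_{\gamma}$ with $\gamma<\xi$, so $X\setminus B_\zeta\in{\mathbf S}^\nu_\gamma$, i.e. $X\setminus B_\zeta\in{\mathbf\Sigma}^{\kappa,0}_{1+\gamma}$, but that is the wrong direction. Rather, I want to produce inside $W$ a nonempty $\tau$-open set on which some set of strictly smaller ${\mathbf\Sigma}$-rank has nonempty $\tau$-interior, and feed that into the inductive hypothesis — but ${\mathbf\Sigma}$-rank is already at most $\xi$, so a cleaner formulation is needed. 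The right induction hypothesis is a statement indexed by ordinals $\beta\le\alpha$: \emph{if $(\tau_\xi)_{\xi<\alpha}$ is a filtration from $\sigma$ to $\tau$ and $A\in\bigcup_{\xi<\beta}{\mathbf\Sigma}^{\kappa,0}_{1+\xi}$ (rel.\ $\sigma$) has nonempty $\tau$-interior, then ${\rm int}_\tau(A)$ contains a nonempty $\big(\bigvee_{\xi<\alpha}\tau_\xi\big)$-open set.} At successor $\beta=\gamma+1$: unwind $A\in{\mathbf S}^\nu_\gamma$ as above to reach $W$ on which $X\setminus B_\zeta$ is $\kappa$-comeager with $B_\zeta\in{\mathbf P}^\nu_{\gamma'}$, $\gamma'<\gamma$. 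Now $W$ is $\tau_\eta$-closed for no obvious $\eta$ — but here is where the \emph{filtration} condition \eqref{E:intap2} enters: shrink $W$ to a basic $\tau_\eta$-open set $W'$ (for suitable $\eta<\alpha$ coming from earlier stages of the induction, tracking which $\tau_\eta$ we have already landed inside). Using filtration condition with the $\tau_\eta$-closed set $\overline{W'}$ (or rather its complement's interior), one transfers computations of interiors from $\tau$ down to $\tau_{\eta'}$ for $\eta'<\alpha$. Iterating the rank-descent $\xi>\gamma>\gamma'>\cdots$ must terminate (ordinals are well-founded), and at rank $0$ the set $X\setminus B_\zeta\in{\mathbf S}^\nu_0$ is $\sigma$-open, hence $\tau_0$-open, hence $\big(\bigvee_{\xi<\alpha}\tau_\xi\big)$-open, giving the desired nonempty open subset of $V$.

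**The limit case and bookkeeping.** For limit $\beta$, $A\in\bigcup_{\xi<\beta}{\mathbf\Sigma}^{\kappa,0}_{1+\xi}$ means $A\in{\mathbf\Sigma}^{\kappa,0}_{1+\xi}$ for some $\xi<\beta$, so this reduces to the successor stage $\xi+1\le\beta$ — no genuinely new argument is needed, the limit case is vacuous modulo reindexing. The real content is the successor step, and within it the main obstacle is \textbf{coordinating the filtration levels with the rank descent}: each time we descend one ${\mathbf\Sigma}/{\mathbf\Pi}$ rank using $\kappa$-Baireness and Observation~\ref{O:bpr}, we produce a nonempty $\tau$-open witness $W$, but to keep making progress we must realize $W$ (or a nonempty open subset of it) as being $\tau_\eta$-open for some $\eta<\alpha$ so that the filtration condition \eqref{E:intap2} lets the \emph{next} rank-descent step compute $\tau$-interiors in agreement with $\tau_{\eta+1}$ — and we must ensure $\eta$ stays below $\alpha$ throughout. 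Since there are at most $\xi<\alpha\le\kappa$ many rank-descent steps and $\alpha\le\kappa$ with $\kappa$ regular-or-not irrelevant here because the count of steps is an \emph{ordinal} descent bounded by $\xi$, the level index $\eta$ can be taken to increase by $1$ per step, staying $<\alpha$ when $\alpha$ is limit, and reaching $\alpha$ exactly when $\alpha$ is a successor — matching the two-case structure of Theorem~\ref{T:stab2}, except that here the ${\mathbf\Sigma}$-side only yields the weaker $\pi$-conclusion because the $\pi$-basis gives a set with nonempty $\tau$-interior rather than one that is a genuine $\tau$-neighborhood, so semiregularity of $\tau$ must be replaced by $\pi$-semiregularity to re-extract a regular open (hence well-behaved) set at the end of each descent step. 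I expect the delicate point to be verifying that after passing to the $\kappa$-comeager piece and shrinking to a regular-open subset, the resulting set is still captured by the filtration machinery at the appropriate level; this is where $\pi$-semiregularity of $\tau$ and the definition of $\pi$-basis are used in tandem, and getting the quantifiers right (``some nonempty open subset'' at every stage) is the crux.
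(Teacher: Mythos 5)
Your opening move matches the paper's: decompose $A\in{\mathbf S}^\nu_\xi$ into $\leq\nu$ pieces $B_\eta\in{\mathbf P}^\nu_{\gamma_\eta}$ of lower rank (Observation~\ref{O:comp}(i)), invoke Observation~\ref{O:bpr} and $\kappa$-Baireness to find a nonempty $\tau$-open $U$ on which one piece is $\kappa$-comeager, and finish by showing ${\rm int}_{\tau^\alpha}\big({\rm cl}_\tau(U)\big)$ is nonempty for $\tau$-open $U$ and appealing to $\pi$-semiregularity. But there are two problems. First, a sign error: from non-meagerness of $A=\bigcup_\eta B_\eta$ one extracts an $\eta_0$ and a nonempty $\tau$-open $U$ with $U\setminus B_{\eta_0}$ $\kappa$-meager, i.e.\ the ${\mathbf\Pi}$-piece $B_{\eta_0}$ itself is comeager in $U$ --- not, as you write, that its complement is comeager on $W$. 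You notice something is off (``that is the wrong direction'') but the pivot you make does not fix it.

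Second, and more seriously, the crux of the proof is entirely absent from your sketch: one must upgrade ``$B_{\eta_0}\in{\mathbf P}^\nu_{\gamma}$ is $\kappa$-comeager in a nonempty \emph{$\tau$-open} set $U$'' to ``$B_{\eta_0}$ is $\kappa$-comeager in a nonempty \emph{$\tau_\beta$-open} set $V\supseteq U$'' (the paper's Lemma~\ref{L:last}). Your proposed iterated ``rank descent'' cannot do this, because ${\mathbf P}^\nu_\gamma$-sets are \emph{intersections} of $\leq\nu$ complements of lower-rank sets, so the category argument that selects a single non-meager piece from a $\leq\nu$-union has no analogue; you cannot keep peeling off ranks the same way down to $\sigma$-open sets. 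The paper instead approximates a ${\mathbf P}^\nu_\gamma$-set $A$ by the canonical $\tau_\gamma$-closed set $c_\gamma(A)$ and controls both differences $A\setminus c_\gamma(A)$ and $c_\gamma(A)\setminus A$ via the $\xi_-$- and $\xi_+$-slight sets (Lemmas~\ref{L:inclu}--\ref{L:stab}); only then does the filtration condition \eqref{E:intap2} convert ${\rm int}_{\tau_\beta}$ of the closure ${\rm cl}_{\tau_\gamma}(A\cap U)$ into ${\rm int}_\tau$ of it, producing the required $\tau_\beta$-open $V\supseteq U$. Your appeals to ``shrinking $W$ to a basic $\tau_\eta$-open set'' and ``bookkeeping of filtration levels'' do not supply a substitute for this machinery, and the claimed need to re-extract regular open sets at each descent step is a red herring: $\pi$-semiregularity is used exactly once, at the very end.
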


\subsection{The slowest filtration}\label{Su:slo}

Let $\sigma$ and $\tau$ be two topologies with $\sigma\subseteq\tau$. We consider here a canonical filtration from $\sigma$ to $\tau$ that was introduced in \cite[Section~2]{So3}. The transfinite sequence of topologies $(\sigma, \tau)_\xi$ is defined by recursion on $\xi$ for all ordinals $\xi$. 
Let 
\[
(\sigma, \tau)_0=\sigma. 
\]
Given an ordinal $\xi>0$, 
if $(\sigma, \tau)_\gamma$ are defined for all $\gamma<\xi$, then $(\sigma, \tau)_\xi$ is the family of all unions of sets of the form 
\[
U\cap {\rm int}_\tau(F)
\]
where, for some $\gamma<\xi$,  $U$ is $(\sigma, \tau)_\gamma$-open and $F$ is $(\sigma, \tau)_\gamma$-closed. Observe that
\begin{equation}\label{E:vest}
\bigvee_{\xi<\alpha}(\sigma,\tau)_\xi = (\sigma, \tau)_\alpha, \;\hbox{ if }\alpha \hbox{ is a limit ordinal}. 
\end{equation}

Proposition~\ref{P:slo}, which is \cite[Proposition~2.2]{So3}, justifies regarding $((\sigma,\tau)_\xi)_{\xi<\rho}$ as the slowest filtration from $\sigma$ to $\tau$. 

\begin{proposition}[\cite{So3}]\label{P:slo} 
Let $\sigma\subseteq \tau$ be topologies. 
\begin{enumerate}
\item[(i)] $((\sigma,\tau)_\xi)_{\xi<\rho}$ is a filtration from $\sigma$ to $\tau$, for each ordinal $\rho$. 

\item[(ii)] If $(\tau_\xi)_{\xi<\rho}$ is a filtration from $\sigma$ to $\tau$, then $(\sigma,\tau)_\xi\subseteq \tau_\xi$, for each $\xi<\rho$. 
\end{enumerate}
\end{proposition}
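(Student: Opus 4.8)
The plan is to prove both parts by transfinite induction on $\xi$, doing (i) first. For (i), before verifying the two clauses \eqref{E:cot} and \eqref{E:intap2} in the definition of a filtration, I would establish simultaneously by induction on $\xi$ the basic structural facts that each $(\sigma,\tau)_\xi$ is a topology, that $(\sigma,\tau)_\gamma\subseteq(\sigma,\tau)_\xi$ whenever $\gamma\le\xi$, and that $(\sigma,\tau)_\xi\subseteq\tau$. The monotonicity is almost immediate once one notes that for $\gamma<\xi$ every $(\sigma,\tau)_\gamma$-open set $U$ equals $U\cap{\rm int}_\tau(X)$, a set of the declared generating form (with $F=X$, which is $(\sigma,\tau)_\gamma$-closed since $\emptyset\in(\sigma,\tau)_\gamma$); so the open sets of the earlier topologies are literally among the generators of $(\sigma,\tau)_\xi$. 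Closure of $(\sigma,\tau)_\xi$ under arbitrary unions is built into the definition, and $\emptyset$ and $X$ belong to it; the only point requiring a short computation is closure under finite intersections, for which I would distribute intersections over unions and use that ${\rm int}_\tau$ commutes with finite intersections, after using monotonicity to bring two generators $U_1\cap{\rm int}_\tau(F_1)$ and $U_2\cap{\rm int}_\tau(F_2)$ to a common level $\gamma=\max(\gamma_1,\gamma_2)$. Finally $(\sigma,\tau)_\xi\subseteq\tau$ holds because each generator is the intersection of the $\tau$-open sets $U$ and ${\rm int}_\tau(F)$. This induction has no genuine case split between successor and limit $\xi$.

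With these facts in hand, clause \eqref{E:cot} for $((\sigma,\tau)_\xi)_{\xi<\rho}$ is precisely the chain of monotonicity and containment statements just proved. For clause \eqref{E:intap2}: given $\alpha<\rho$ and a set $F$ that is $(\sigma,\tau)_\xi$-closed for some $\xi<\alpha$, the inclusion ${\rm int}_{(\sigma,\tau)_\alpha}(F)\subseteq{\rm int}_\tau(F)$ is immediate from $(\sigma,\tau)_\alpha\subseteq\tau$, while the reverse inclusion holds because ${\rm int}_\tau(F)$ is itself a generator of $(\sigma,\tau)_\alpha$ --- take $U=X$ in the definition, with $\gamma=\xi<\alpha$ --- hence it is a $(\sigma,\tau)_\alpha$-open subset of $F$, so it is contained in ${\rm int}_{(\sigma,\tau)_\alpha}(F)$. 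Since nothing in the argument depends on $\rho$, this proves (i) for every ordinal $\rho$ at once.

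For (ii), fix a filtration $(\tau_\xi)_{\xi<\rho}$ and prove $(\sigma,\tau)_\xi\subseteq\tau_\xi$ by induction on $\xi<\rho$; the base case is $(\sigma,\tau)_0=\sigma=\tau_0$. For the inductive step it suffices to show that every generator $V=U\cap{\rm int}_\tau(F)$ of $(\sigma,\tau)_\xi$ is $\tau_\xi$-open, where $U$ is $(\sigma,\tau)_\gamma$-open and $F$ is $(\sigma,\tau)_\gamma$-closed for some $\gamma<\xi$. By the inductive hypothesis $(\sigma,\tau)_\gamma\subseteq\tau_\gamma$, so $U$ is $\tau_\gamma$-open and $F$ is $\tau_\gamma$-closed. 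Now apply clause \eqref{E:intap2} for the filtration $(\tau_\xi)_{\xi<\rho}$ with $\alpha=\xi$ (legitimate as $\xi<\rho$) to the $\tau_\gamma$-closed set $F$ with $\gamma<\xi$: this gives ${\rm int}_{\tau_\xi}(F)={\rm int}_\tau(F)$, so ${\rm int}_\tau(F)$ is $\tau_\xi$-open; and $U$ is $\tau_\xi$-open by the chain $\tau_\gamma\subseteq\tau_\xi$ from \eqref{E:cot}. Hence $V$ is $\tau_\xi$-open, and since every $(\sigma,\tau)_\xi$-open set is a union of such $V$, we get $(\sigma,\tau)_\xi\subseteq\tau_\xi$.

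I do not expect a genuinely hard step: the proof is essentially a bookkeeping induction. The one conceptual move worth isolating --- and the thing that drives both parts --- is the observation that ${\rm int}_\tau(F)$ is not merely approximated inside the new topology but is literally one of its basic open sets, namely the generator with $U=X$. This yields clause \eqref{E:intap2} in (i) directly, and in (ii) it is exactly the point at which the hypothesis that $(\tau_\xi)_{\xi<\rho}$ is a filtration is used, turning ${\rm int}_\tau(F)$ into a $\tau_\xi$-open set. The only mild care needed is the routine juggling of the level parameter $\gamma$ across stages, which is handled uniformly by the monotonicity of $\xi\mapsto(\sigma,\tau)_\xi$.
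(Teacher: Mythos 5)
Your proof is correct, and the key observation --- that ${\rm int}_\tau(F)$ is itself a generator of $(\sigma,\tau)_\alpha$ (take $U=X$), which yields \eqref{E:intap2} in part (i) and is exactly where the filtration hypothesis is invoked in part (ii) --- is the right engine for both halves. The paper does not reprove this proposition but imports it from \cite[Proposition~2.2]{So3}, and your bookkeeping induction is the standard argument one would give there.
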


The following corollary is an immediate consequence of Proposition~\ref{P:slo}(i), equation \eqref{E:vest}, and Theorem~\ref{T:stab2}.

\begin{corollary}\label{C:stab2}
Let $\sigma\subseteq \tau$ be topologies with $\tau$ semiregular. Let $\kappa$ be an infinite cardinal such that $\tau$ is $\kappa$-Baire, and 
let $1\leq \alpha\leq \kappa$ be an ordinal. 
Assume that $\tau$ has a neighborhood basis consisting of sets in $\bigcup_{\xi<\alpha}{\mathbf \Pi}^{\kappa,0}_{1+\xi}$ with respect to $\sigma$. Then $\tau= (\sigma,\tau)_\alpha$. 
\end{corollary}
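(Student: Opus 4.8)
The plan is to prove Theorem~\ref{T:stab2} by induction on the ordinal $\alpha$, treating the successor and limit cases in parallel as much as possible, and then read off Corollary~\ref{C:stab2} as indicated. The heart of the matter is part (i): given a filtration $(\tau_\xi)_{\xi\leq\alpha}$ from $\sigma$ to $\tau$ and a neighborhood basis $\mathcal B$ of $\tau$ consisting of sets that are $\tau_0$-Borel of class $\bigcup_{\xi<\alpha}{\mathbf \Pi}^{\kappa,0}_{1+\xi}$, we must show every $\tau$-open set is $\tau_\alpha$-open. Since $\tau_\alpha\subseteq\tau$ always, and since a set is $\tau$-open iff it is a union of sets from $\mathcal B$ having $x$ in their $\tau$-interior for each of its points, it suffices to show: for each $B\in\mathcal B$, the set $\operatorname{int}_\tau(B)$ is $\tau_\alpha$-open. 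So the real statement to prove is that for a set $B$ lying in ${\mathbf \Pi}^{\kappa,0}_{1+\xi}$ with respect to $\tau_0=\sigma$ and with $\xi<\alpha$, one has $\operatorname{int}_\tau(B)\in\tau_\alpha$ (in the successor case) or $\operatorname{int}_\tau(B)\in\bigvee_{\eta<\alpha}\tau_\eta$ (in the limit case).

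The key step is a lemma computing, for a $\kappa$-Borel set $B$ of $\sigma$ of bounded class, the $\tau$-interior of $B$ modulo a $\kappa$-meager set, expressed in terms of the intermediate topologies. First I would unravel the ${\mathbf \Pi}$/${\mathbf \Sigma}$ hierarchy: $B\in{\mathbf P}^\nu_\xi$ means $B=\bigcap_{i<\nu}(X\setminus B_i)$ with each $B_i\in{\mathbf P}^\nu_{\gamma_i}$, $\gamma_i<\xi$; inductively each $B_i$, being of lower class, has a $\tau_{\eta_i}$-related description for some $\eta_i<\alpha$, using the induction hypothesis. The bridge to the filtration is the defining condition \eqref{E:intap2}: if $F$ is $\tau_\xi$-closed for $\xi<\alpha$, then $\operatorname{int}_{\tau_\alpha}(F)=\operatorname{int}_\tau(F)$; so $\tau_\alpha$ already "sees" the true interior of all sets simple relative to stages below $\alpha$. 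Combined with the $\kappa$-Baire property (Observation~\ref{O:bpr} gives that $\kappa$-Borel sets have the $\kappa$-Baire property, and $\kappa$-Baireness of $\tau$ forces comeager sets to be dense), one shows by induction on $\xi<\alpha$ that every $B\in{\mathbf \Pi}^{\kappa,0}_{1+\xi}$ (w.r.t. $\sigma$) satisfies: $\operatorname{int}_\tau(B)$ differs from a $\tau_{\xi+1}$-open set (resp. $\bigvee_{\eta\le\xi}\tau_\eta$-open set at limit $\xi$) by a $\kappa$-meager set, and in fact that the $\tau$-closure of $\operatorname{int}_\tau(B)$ minus $\operatorname{int}_\tau(B)$ is handled using semiregularity — this last point is where semiregularity of $\tau$ enters, allowing one to replace $\operatorname{int}_\tau(B)$ by a union of interiors of closed sets and thereby pass from "equal modulo meager" to genuine openness in the target topology.

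The main obstacle, I expect, will be precisely this passage from "modulo $\kappa$-meager" to "actual equality/openness." The induction naturally produces a $\tau_\beta$-open set $U$ with $U\subseteq\operatorname{int}_\tau(B)$ and $\operatorname{int}_\tau(B)\setminus U$ being $\kappa$-meager; one then needs to argue $\operatorname{int}_\tau(B)=\operatorname{int}_{\tau_\beta}(\operatorname{cl}_{\tau_\beta}(U))$ or something of this shape, and semiregularity of $\tau$ is what lets us recover $\operatorname{int}_\tau(B)$ as a union of $\tau$-interiors of $\tau$-closed sets, each of which, being simple relative to a stage below $\alpha$ once we track the complexity, has its $\tau$-interior computed correctly by $\tau_\alpha$ via \eqref{E:intap2}. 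Managing the bookkeeping of which stage $\xi<\alpha$ controls which piece — especially handling unions of $\le\nu<\kappa$ many pieces living at possibly different stages $\gamma_i<\xi$, where at a limit $\alpha$ the relevant join $\bigvee_{\eta<\alpha}\tau_\eta$ must absorb all of them — and verifying that the $\kappa$-meager error sets genuinely have empty $\tau$-interior (hence do not affect $\operatorname{int}_\tau$) via $\kappa$-Baireness, is the delicate core. Once part (i) is established, part (ii) follows by the same induction with $\tau_\alpha$ replaced throughout by $\bigvee_{\xi<\alpha}\tau_\xi$, noting that a filtration $(\tau_\xi)_{\xi<\alpha}$ with $\alpha$ limit can be extended by setting $\tau_\alpha=\bigvee_{\xi<\alpha}\tau_\xi$ and checking this still satisfies \eqref{E:intap2}. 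Finally, Corollary~\ref{C:stab2} is immediate: by Proposition~\ref{P:slo}(i) the slowest filtration is a filtration from $\sigma$ to $\tau$, so Theorem~\ref{T:stab2}(i) gives $\tau=(\sigma,\tau)_\alpha$ when $\alpha$ is a successor, and for $\alpha$ limit Theorem~\ref{T:stab2}(ii) together with \eqref{E:vest} gives $\tau=\bigvee_{\xi<\alpha}(\sigma,\tau)_\xi=(\sigma,\tau)_\alpha$.
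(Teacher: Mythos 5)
Your final paragraph is exactly the paper's proof of Corollary~\ref{C:stab2}: apply Proposition~\ref{P:slo}(i) to see that $((\sigma,\tau)_\xi)_\xi$ is a filtration, invoke Theorem~\ref{T:stab2}(i) in the successor case and Theorem~\ref{T:stab2}(ii) together with \eqref{E:vest} in the limit case. The lengthy preceding sketch of how one might prove Theorem~\ref{T:stab2} itself is unnecessary for this corollary (the theorem is available as a prior result) and is too vague to assess as a proof of the theorem, but the derivation of the corollary from it is correct and identical to the paper's.
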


In a manner analogous to Corollary~\ref{C:stab2}, Corollary~\ref{C:stab3} follows from Proposition~\ref{P:slo}(i), \eqref{E:vest}, and Theorem~\ref{T:stab3}.

\begin{corollary}\label{C:stab3}
Let $\sigma\subseteq \tau$ be topologies with $\tau$ $\pi$-semiregular. 
Let $\kappa$ be an infinite cardinal such that $\tau$ is $\kappa$-Baire, and 
let $1\leq \alpha\leq \kappa$ be an ordinal. Assume that $\tau$ has a neighborhood $\pi$-basis consisting of sets in $\bigcup_{\xi<\alpha}{\mathbf \Sigma}^{\kappa,0}_{1+\xi}$ with respect to $\sigma$. 
Then each nonempty $\tau$-open set contains a nonempty
$(\sigma, \tau)_\alpha$-open set. 

In fact, if $\alpha$ is a successor, then each nonempty $\tau$-open set contains a nonempty
$(\sigma, \tau)_{\alpha-1}$-open set. 
\end{corollary}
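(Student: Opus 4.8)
\textbf{Proof proposal for Theorem~\ref{T:stab3}.}

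The plan is to prove the statement by induction on $\alpha$, mirroring the inductive structure one expects from the $\mathbf\Pi$ case but now tracking only $\pi$-bases and nonempty open sets. Fix a nonempty $\tau$-open set $V$. By the neighborhood $\pi$-basis hypothesis, $V$ contains a set $A$ with nonempty $\tau$-interior such that $A\in{\mathbf\Sigma}^{\kappa,0}_{1+\xi}$ with respect to $\sigma$ for some $\xi<\alpha$; replacing $V$ by ${\rm int}_\tau(A)$ we may assume $A$ itself has nonempty $\tau$-interior, say $W={\rm int}_\tau(A)\ne\emptyset$. Since $A\in{\mathbf S}^\nu_\xi$ with respect to $\sigma$ for some cardinal $\nu<\kappa$ with $\xi<\nu^+$, write $A=\bigcup_{\eta<\mu}(X\setminus B_\eta)$ with $\mu\le\nu$ and each $B_\eta$ in ${\mathbf\Sigma}^{\kappa,0}_{1+\gamma_\eta}$ for some $\gamma_\eta<\xi$ (or, in the base case $\xi=0$, $A$ is already $\sigma$-open, hence $\big(\bigvee_{\xi<\alpha}\tau_\xi\big)$-open, and we are done). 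The key point will be to locate a $\tau_\delta$-closed set, for an appropriate $\delta<\alpha$, that is sandwiched between a nonempty $\tau$-open set and $A$, so that condition \eqref{E:intap2} forces its $\tau_\alpha$-interior (or $\big(\bigvee_{\xi<\alpha}\tau_\xi\big)$-interior) to be nonempty.

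Here is how I expect the induction step to run. Each complement $X\setminus B_\eta$ has $B_\eta\in{\mathbf\Sigma}^{\kappa,0}_{1+\gamma_\eta}$, and by Observation~\ref{O:comp}(ii) the set $B_\eta$ lies in ${\mathbf\Pi}^{\kappa,0}_{1+\gamma_\eta+1}$, so its complement lies in a $\mathbf\Sigma$ class with index $\le\xi$; this is consistent. Applying the inductive hypothesis (the $\mathbf\Sigma$ statement at stages $<\alpha$) to the $\pi$-basis elements is not directly available since the $B_\eta$'s are not $\tau$-open, so instead I would mimic the complementation/union bookkeeping from the proof of Observation~\ref{O:bpr}: working inside the $\tau_\alpha$-topology (or $\bigvee_{\xi<\alpha}\tau_\xi$ when $\alpha$ is limit), I claim each $B_\eta$, being $\kappa$-Borel with respect to $\sigma\subseteq\tau_\xi$ for all $\xi$, has the $\kappa$-Baire property with respect to $\bigvee_{\xi<\alpha}\tau_\xi$, and moreover that a transfinite-recursion argument produces, for each $\eta$, a $\tau_{\gamma_\eta'}$-closed set $F_\eta$ (with $\gamma_\eta'<\alpha$ commensurate with $\gamma_\eta$) such that ${\rm int}_\tau(F_\eta)\ne\emptyset$ inside the relevant open piece and $F_\eta$ approximates $X\setminus B_\eta$ off a $\kappa$-meager set. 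Since $\tau$ is $\kappa$-Baire and we are intersecting/unioning $<\kappa$ many pieces (using $\mu\le\nu<\kappa$ and $\xi<\kappa$), the $\kappa$-meager error set cannot swallow a nonempty $\tau$-open set, so some nonempty $\tau$-open set $W'\subseteq W$ is covered by the $F_\eta$'s in a way that exhibits $W'$ as containing a nonempty interior of a $\tau_\delta$-closed set for some $\delta<\alpha$. Then \eqref{E:intap2} gives ${\rm int}_{\tau_\alpha}(F)={\rm int}_\tau(F)\ne\emptyset$ for that set, and refining once more inside this interior via $\pi$-semiregularity of $\tau$ (to pass to a regular open set contained in $A$) yields a nonempty $\big(\bigvee_{\xi<\alpha}\tau_\xi\big)$-open subset of $V$. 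When $\alpha=\delta+1$ is a successor, the set $F$ produced is already $\tau_{\alpha-1}$-closed, and \eqref{E:intap2} at stage $\alpha$ is not even needed — one gets directly that ${\rm int}_{\tau_\alpha}(F)={\rm int}_\tau(F)$, but in fact the construction places the nonempty interior at level $\alpha-1$, giving the sharper "in fact" clause.

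The main obstacle, as I see it, is organizing the transfinite recursion that extracts from a $\mathbf\Sigma^{\kappa,0}_{1+\xi}$ set (with respect to $\sigma$) a lower-level $\tau_\gamma$-closed set with nonempty $\tau$-interior inside a prescribed nonempty $\tau$-open region — that is, making precise the induction hypothesis so that it is simultaneously strong enough to be inherited through the $<\nu$-fold unions and complementations and weak enough to be provable at successor and limit stages. At limit $\alpha$ the subtlety is that $\bigvee_{\xi<\alpha}\tau_\xi$ need not equal any single $\tau_\xi$, so the closed set witnessing the interior must be found at some specific $\tau_\xi$ with $\xi<\alpha$; the filtration condition \eqref{E:intap2} is exactly what bridges this, but one must be careful that the union defining $A$ has $<\kappa$ terms so that $\kappa$-Baireness of $\tau$ applies to rule out the error set being somewhere dense. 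The $\pi$-semiregularity of $\tau$ enters only at the very end, to convert "nonempty $\tau$-interior of a low-level closed set" into "contains a nonempty low-level-open set", and should be routine once the core sandwiching argument is in place. I would also double-check the singular-$\kappa$ bookkeeping (the $\nu$ versus $\kappa$ distinction in the definitions of ${\mathbf S}^\nu_\xi$), since that is where an otherwise-clean induction can silently break.
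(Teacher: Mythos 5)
There is a genuine gap, and in fact two. First, the statement you were asked to prove is Corollary~\ref{C:stab3}, whose conclusion is about the specific topology $(\sigma,\tau)_\alpha$ from the slowest filtration; in the paper it is a short deduction from Theorem~\ref{T:stab3} via Proposition~\ref{P:slo}(i) (the sequence $((\sigma,\tau)_\xi)_\xi$ \emph{is} a filtration) and equation \eqref{E:vest} (at limit $\alpha$, $\bigvee_{\xi<\alpha}(\sigma,\tau)_\xi=(\sigma,\tau)_\alpha$; at successor $\alpha$ the join is $(\sigma,\tau)_{\alpha-1}$, which yields the ``in fact'' clause). Your proposal never mentions the topologies $(\sigma,\tau)_\xi$ or these two facts, so even granting everything you sketch, the corollary as stated is not reached.

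Second, taken as a proof of Theorem~\ref{T:stab3} itself, your sketch identifies the right overall shape --- decompose the $\mathbf\Sigma$ set into lower-level pieces, find a low-level $\tau_\gamma$-closed set sandwiched near one piece, invoke the filtration condition \eqref{E:intap2} to compute its interior, use $\kappa$-Baireness to keep the error set from being somewhere dense, and finish with $\pi$-semiregularity --- but the step you yourself flag as ``the main obstacle'' is precisely the paper's central technical contribution and cannot be waved through. The claim that ``a transfinite-recursion argument produces, for each $\eta$, a $\tau_{\gamma_\eta'}$-closed set $F_\eta$ \ldots\ that approximates $X\setminus B_\eta$ off a $\kappa$-meager set'' is exactly Lemma~\ref{L:slal}, and proving it requires more than the Baire-property bookkeeping of Observation~\ref{O:bpr}: the Baire property only yields an \emph{open} set modulo meager, with no control over the topological level of the approximating closed set, whereas the argument needs $F$ to be closed in the \emph{specific} topology $\tau_\xi$ matching the Borel rank of $A$ so that the (weak) filtration condition applies to it. The paper achieves this with the asymmetric $\xi_-$-slight/$\xi_+$-slight machinery and the operator $c_\xi(A)$, followed by Lemmas~\ref{L:stab} and \ref{L:last}. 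Without supplying that construction (and verifying, as in Lemma~\ref{L:inclu}, that slight sets are $\kappa$-meager so that Baireness can be brought to bear), the induction you describe does not close.
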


Note that by Proposition~\ref{P:slo}(ii) the two corollaries are, in fact, equivalent to the theorems from which they are derived.

\section{Proofs of the main results}

The proofs below build on the past experience from \cite{So2} and \cite{So3}. 
In comparison with \cite{So3}, the main new contributions are the notions of $\xi_-$-slight and $\xi_+$-slight sets and the arguments in Section~\ref{Su:endg}. The introduction of $\xi_-$-slight and $\xi_+$-slight sets allows us to state an asymmetric version of Lemma~\ref{L:slal} (with sets $A\setminus F$ and $F\setminus A$ being small in different ways) and to run the arguments in Section~\ref{Su:aux} in much greater generality than in \cite{So3}.

{\bf We fix topologies $\sigma\subseteq \tau$. 

We fix an infinite cardinal $\kappa$ such that $\tau$ is $\kappa$-Baire.

The phrases concerning meagerness, non-meagerness, and comeagerness refer to the corresponding notions with respect to the topology $\tau$.}

\subsection{Reformulations of Theorems~\ref{T:stab2} and \ref{T:stab3}} 
In order to state our arguments in a uniform manner, it will be convenient to introduce a piece of notation and a convention. 
For $1\leq \alpha\leq\kappa$, let 
\[
\alpha\oplus 1 = 
\begin{cases}
\alpha+1,& \text{ if $\alpha$ is a successor ordinal;}\\
\alpha, & \text{ if $\alpha$ is a limit ordinal.}
\end{cases}
\]
Observe that all the filtrations from $\sigma$ to $\tau$ in Theorems~\ref{T:stab2} and \ref{T:stab3} are of the form $(\tau_\xi)_{\xi<\rho}$ for some $\rho\leq\kappa$. Each filtration of this form can be extended to a filtration $(\tau_\xi)_{\xi<\kappa}$ from $\sigma$ to $\tau$ by letting 
$\tau_\xi=\tau$ for all $\xi$ with $\rho\leq \xi<\kappa$. Thus, without loss of generality and by convention, we will assume that all the filtrations are of the form $(\tau_\xi)_{\xi<\kappa}$.

Now, using the notation and convention set up above, the conclusions (i) and (ii) of Theorem~\ref{T:stab2} can be amalgamated into one statement so the theorem reads:
 
\medskip

{\em Let $1\leq \alpha\leq \kappa$. 
Assume that $\tau$ is semiregular and has a neighborhood basis consisting of sets in $\bigcup_{\xi<\alpha}{\mathbf \Pi}^{\kappa,0}_{1+\xi}$ with respect to $\sigma$. 

If $(\tau_\xi)_{\xi<\kappa}$ is a filtration from $\sigma$ to $\tau$, then $\tau=\bigvee_{\xi<\alpha\oplus 1}\tau_\xi$.}

\medskip

Similarly, Theorem~\ref{T:stab3} can be phrased as follows: 

 \medskip

{\em Let $1\leq \alpha\leq \kappa$. Assume that $\tau$ is $\pi$-semiregular and has a neighborhood $\pi$-basis consisting of sets in $\bigcup_{\xi<\alpha}{\mathbf \Sigma}^{\kappa,0}_{1+\xi}$ with respect to $\sigma$. 

If $(\tau_\xi)_{\xi<\kappa}$ is a filtration from $\sigma$ to $\tau$, then 
each nonempty $\tau$-open set contains a nonempty $\big(\bigvee_{\xi<\alpha} \tau_\xi\big)$-open set.}

\subsection{Lemmas}\label{Su:aux}

{\bf We fix a sequence $(\tau_\xi)_{\xi<\kappa}$ of topologies fulfilling 
condition \eqref{E:cot}.}  

{\bf We fix a cardinal $\nu<\kappa$.} 

As usual, by $\nu^+$, we denote the cardinal successor of $\nu$. Obviously, we have $\nu^+\leq\kappa$.

With $\nu$ fixed, we define the following notion that will be crucial in the proof. We define families of subsets of $X$ called $\xi_-$-slight and $\xi_+$-slight. 
This is done by simultaneous recursion on $\xi< \nu^+$. 
The only $0_-${\bf -slight} set is the empty set. 
For $0<\xi<\nu^+$, let {\bf $\xi_-$-slight} sets be the smallest family of sets closed under taking subsets and unions of $\leq \nu$ sets and containing 
\begin{enumerate}
\item[---] $\gamma_+$-slight sets for all $\gamma<\xi$ and 

\item[---] $\tau_\xi$-closed sets with empty $\tau$-interiors. 
\end{enumerate} 
For $\xi<\nu^+$, let {\bf $\xi_+$-slight} sets be the family of sets that are $\tau_\xi$-locally $\xi_-$-slight, that is, $A\subseteq X$ is $\xi_+$-slight if there exists a covering of $A$ by $\tau_\xi$-open sets $U$ such that $A\cap U$ is $\xi_-$-slight.

\begin{lemma}\label{L:inclu} 
Let $\xi<\nu^+$. 
\begin{enumerate}
\item[(i)] If $\gamma<\xi$, then each set that is $\gamma_-$-slight or $\gamma_+$slight is both $\xi_-$-slight and $\xi_+$-slight. 

\item[(ii)] All $\xi_-$-slight and all $\xi_+$-slight sets are $\nu^+$-meager, in particular, they are $\kappa$-meager. 
\end{enumerate} 
\end{lemma}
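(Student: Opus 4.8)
The plan is to prove both parts by simultaneous induction on $\xi < \nu^+$, following the recursive definition of $\xi_-$-slight and $\xi_+$-slight sets. Since $\xi_-$-slight sets are defined as the smallest family closed under subsets and unions of $\leq\nu$ sets containing certain generators, for part (i) it suffices to check that the family $\{A : A \text{ is } \xi_-\text{-slight and } \xi_+\text{-slight}\}$ is closed under subsets and unions of $\leq\nu$ sets and contains all $\gamma_+$-slight sets for $\gamma<\xi$; then minimality of the $\gamma_-$-slight family (applied inductively) and a direct argument for $\gamma_+$-slight sets give the containment. Closure under subsets is immediate from the definitions (both families are explicitly closed under subsets). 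Closure of the $\xi_-$-slight family under unions of $\leq\nu$ sets is by definition; for $\xi_+$-slight sets one takes the union of the witnessing $\tau_\xi$-open covers, noting that on each piece the intersection with $A$ is a subset of a union of $\leq\nu$ many $\xi_-$-slight sets, hence $\xi_-$-slight.

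The key monotonicity step is: if $\gamma < \xi$ then every $\gamma_+$-slight set is $\xi_-$-slight (and hence, being a subset of itself covered trivially by $X$, also $\xi_+$-slight once we know $\gamma_+$-slight $\Rightarrow \xi_-$-slight). But $\gamma_+$-slight sets are exactly the generators listed in the first bullet of the definition of $\xi_-$-slight, so this is immediate. Combined with the inductive hypothesis that $\gamma_-$-slight $\Rightarrow$ both $\xi'_-$- and $\xi'_+$-slight for all $\gamma < \xi' \le \xi$ (in particular $\gamma_-$-slight $\Rightarrow$ $\gamma_+$-slight by covering with $X$), this closes the induction for (i). One should also handle the base case $\xi=0$ separately: the only $0_-$-slight set is $\emptyset$, which is trivially $0_+$-slight, and there is nothing to prove about $\gamma < 0$.

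For part (ii), I would again induct on $\xi$. The base case $\xi = 0$ is trivial since $\emptyset$ is $\nu^+$-meager. For the inductive step on $\xi_-$-slight sets: the generators of the second kind, namely $\tau_\xi$-closed sets with empty $\tau$-interior, are $\tau$-nowhere dense (a $\tau$-closed set — note $\tau_\xi \subseteq \tau$, so a $\tau_\xi$-closed set is $\tau$-closed — with empty $\tau$-interior is $\tau$-nowhere dense), hence $\nu^+$-meager; the generators of the first kind are $\nu^+$-meager by the inductive hypothesis for (i) applied through part (ii) at stage $\gamma < \xi$; and the family of $\nu^+$-meager sets is closed under subsets and under unions of $\leq\nu < \nu^+$ sets. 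So every $\xi_-$-slight set is $\nu^+$-meager. For $\xi_+$-slight sets $A$, one has a $\tau_\xi$-open (hence $\tau$-open) cover $(U_i)$ with each $A \cap U_i$ being $\xi_-$-slight, hence $\tau$-nowhere dense by what we just showed — actually $\nu^+$-meager; here I would invoke the standard fact that a set which is locally $\nu^+$-meager with respect to an open cover is itself $\nu^+$-meager, which follows because a $\tau$-nowhere dense set relative to an open set, restricted appropriately, remains nowhere dense in $X$, and one reorganizes the $<\nu^+$ many nowhere dense pieces across the cover — but this requires care if the cover is large. The main obstacle I anticipate is precisely this localization step: the cover witnessing $\xi_+$-slightness may have arbitrary cardinality, so to conclude $\nu^+$-meagerness one cannot simply take a union of the $<\nu^+$ pieces over all cover elements. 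The resolution is that a set $N$ with $N \cap U$ nowhere dense for every element $U$ of an open cover of $N$ is itself nowhere dense (this is a standard local characterization of nowhere density and does not depend on the size of the cover), so $A$, being a $\leq\nu$-union — in fact covered by pieces each of which is a $\leq\nu$-union of $\tau$-nowhere dense sets, and local nowhere density lets us see the relevant unions are globally nowhere dense — is a union of $<\nu^+$ many $\tau$-nowhere dense sets. I would spell out that one reduces to the case where each $A \cap U$ is a single $\tau$-nowhere dense set (by distributing the $\leq\nu$-union through the definition) and then apply the local characterization of nowhere density, obtaining that $A$ itself is $\tau$-nowhere dense, hence certainly $\nu^+$-meager and $\kappa$-meager since $\nu^+ \le \kappa$.
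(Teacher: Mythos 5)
Part (i) of your proposal is fine: the chain ``$\gamma_-$-slight $\Rightarrow$ $\gamma_+$-slight (cover by $X$) $\Rightarrow$ generator of $\xi_-$-slight $\Rightarrow$ $\xi_-$-slight $\Rightarrow$ $\xi_+$-slight (cover by $X$ again)'' is exactly what is needed, and the paper dismisses (i) as obvious. (Your side claim that $\xi_+$-slight sets are closed under $\leq\nu$-unions is not actually justified by ``taking the union of the witnessing covers''---for $U\in\mathcal U_{i_0}$ you only control $A_{i_0}\cap U$, not $A_i\cap U$ for $i\neq i_0$---but this closure property is not needed for (i), so no harm done.)

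Part (ii), however, has a genuine gap at the local-to-global step for $\xi_+$-slight sets. You correctly identify the obstacle (the witnessing cover may be arbitrarily large), but your proposed resolution does not work. You want to ``reduce to the case where each $A\cap U$ is a single nowhere dense set by distributing the $\leq\nu$-union through the definition'': writing $A\cap U=\bigcup_{\eta<\nu}N^U_\eta$ and setting $M_\eta=\bigcup_{U}N^U_\eta$, you would need each $M_\eta$ to be nowhere dense. But the local characterization of nowhere density does not apply to $M_\eta$ relative to the cover $\mathcal U$: for a fixed $U'$, the set $M_\eta\cap U'$ is $\bigcup_{U}(N^{U}_\eta\cap U')$, a union of possibly more than $\nu$ (indeed arbitrarily many) nowhere dense sets, since the $U$'s overlap; there is no reason for it to be nowhere dense. (Your final conclusion that $A$ itself is $\tau$-nowhere dense is also too strong---a $\xi_+$-slight set is in general only $\nu^+$-meager, as the definition already allows $\leq\nu$-unions of nowhere dense sets locally.) The missing idea, which is the heart of the paper's proof, is a Banach-category-theorem argument: replace $\mathcal U$ by a \emph{maximal pairwise disjoint} family $\mathcal V$ of open sets $V\subseteq\bigcup\mathcal U$ with $A\cap V$ $\nu^+$-meager. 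Maximality gives that $\bigcup\mathcal V$ is dense in $\bigcup\mathcal U$, so $A\setminus\bigcup\mathcal V$ is nowhere dense; and \emph{disjointness} is exactly what makes your amalgamation legitimate, since for pairwise disjoint open $V$ the set $\bigcup_{V\in\mathcal V}A^V_\eta$ (with $A^V_\eta\subseteq V$ nowhere dense) is nowhere dense, whence $A\cap\bigcup\mathcal V$ is a union of $\nu$ nowhere dense sets. Without the disjointification your argument does not close.
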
 

\begin{proof} (i) is obvious. 

(ii) 
The proof is by induction on $\xi$. The conclusion is obvious for $\xi=0$ since $\emptyset$ is the only $0_-$-slight set and it is also the only $0_+$-slight set. To see the conclusion for $\xi>0$, note that $\nu^+$-meager sets are closed under taking unions of $\leq \nu$ sets and subsets, and under the following operation:  if $A$ is covered by a family $\mathcal U$ of $\tau$-open sets $U$ such that $A\cap U$ is $\nu^+$-meager,
then $A$ is $\nu^+$-meager. This last assertion is proved by modifying the argument used to prove Banach's theorem that an arbitrary union of open meager sets is meager. Let ${\mathcal V}$ be a maximal 
family of pairwise disjoint $\tau$-open sets $V$ such that $V\subseteq \bigcup {\mathcal U}$ and $A\cap V$ is $\nu^+$-meager. By the assumption on $\mathcal U$ and by maximality of $\mathcal V$, we have that $\bigcup {\mathcal V}$ is $\tau$-dense in $\bigcup {\mathcal U}$. Since $\bigcup {\mathcal V}$ is $\tau$-open, it follows that $\bigcup {\mathcal U}\setminus \bigcup {\mathcal V}$ is $\tau$-nowhere dense, and therefore so is $A\setminus \bigcup {\mathcal V}$. So, it suffices to show that $A\cap \bigcup {\mathcal V}$ is $\nu^+$-meager. For each $V\in {\mathcal V}$, let $A^V_\eta$, $\eta<\nu$, be $\tau$-nowhere dense and such that 
\[
A\cap V= \bigcup_{\eta<\nu} A^V_\eta.
\]
Since $\mathcal V$ consists of pairwise disjoint $\tau$-open sets, we have that, for each $\eta<\nu$, the set $\bigcup_{V\in {\mathcal V}} A^V_\eta$ is $\tau$-nowhere dense. Thus, 
\[
A\cap \bigcup {\mathcal V} = \bigcup_{\eta<\nu} \big( \bigcup_{V\in {\mathcal V}} A^V_\eta\big)
\]
is $\nu^+$-meager, as required.

The above observations, in addition to our inductive assumption that $\gamma_+$-slight sets are $\nu^+$-meager and the observation that $\tau_\xi$-closed sets with empty $\tau$-interiors are $\tau$-nowhere dense (since $\tau_\xi$-closed sets are $\tau$-closed), we see that $\xi_-$-slight are $\nu^+$-meager and then that $\xi_+$-slight sets are $\nu^+$-meager. 
\end{proof}

To state the next two lemmas in their optimal form, it will be necessary to recall the notion of weak filtration from \cite{So3}. 
We say that $(\tau_\xi)_{\xi<\rho}$ is a {\bf weak filtration from $\sigma$ to $\tau$} provided \eqref{E:cot} holds and, for each $\alpha<\rho$, 
if $F$ is $\tau_\xi$-closed for some $\xi<\alpha$, then 
\begin{equation}\label{E:intap}
{\rm int}_{\tau_\alpha}(F) \hbox{ is $\tau$-dense in }  {\rm int}_{\tau}(F). 
\end{equation}
It is clear that each filtration is a weak filtration.

{\bf From this point on, we assume that the sequence $(\tau_\xi)_{\xi<\kappa^+}$ is a weak filtration.}
 
Recall the definition of ${\mathbf P}^\nu_\xi$ from Section~\ref{Su:pisi}.

\begin{lemma}\label{L:slal}
If $A$ is in ${\mathbf P}^\nu_\xi$,
for $\xi< \nu^+$, then there exists a $\tau_\xi$-closed set $F$ such that $A\setminus F$ is $\xi_+$-slight and $F\setminus A$ is $\xi_-$-slight. 
\end{lemma}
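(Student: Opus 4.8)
The proof goes by induction on $\xi < \nu^+$, mirroring the recursive definition of ${\mathbf P}^\nu_\xi$. The base case $\xi=0$ is immediate: a set $A$ in ${\mathbf P}^\nu_0$ is $\tau$-closed, hence $\tau_0$-closed (since $\tau_0=\sigma\subseteq\tau_\xi$; wait---actually ${\mathbf P}^\nu_0$ is the closed sets of $\tau$, and we need $F$ that is $\tau_0$-closed), so one wants to take $F=A$ if $A$ is already $\tau_0$-closed, but in general $A$ is only $\tau$-closed. Here one should instead observe that ${\mathbf P}^\nu_0$ consisting of $\tau$-closed sets is the right reading only when $\tau_0$ may differ; so for $\xi=0$ take $F = \operatorname{cl}_{\tau_0}(A) \supseteq A$, so $A\setminus F=\emptyset$ is $0_-$-slight (hence $0_+$-slight), and $F\setminus A = \operatorname{cl}_{\tau_0}(A)\setminus A$; this need not be $0_-$-slight. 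So the base case needs more thought---likely the intended reading is that in ${\mathbf P}^\nu_0$ one is at the level of $\sigma$-closed sets only after absorbing closures, or one uses $\xi=0$ differently. I will revisit: the cleaner base case is to note that a $\tau$-closed set $A$ satisfies: $\operatorname{int}_{\tau_0}(A)$ is $\tau$-dense in $\operatorname{int}_\tau(A)$ is not obviously what we want either. The safest route is to prove the statement for $\xi=1$ first using the weak filtration property and treat $\xi=0$ as the degenerate instance where $A$ closed means one takes $F=A$ after noting closed sets of $\tau$ that arise are handled by the $\xi=1$ machinery; I would in the writeup simply check that for $A$ $\tau$-closed, $F:=A$ works with $\tau_1$ in place of $\tau_0$ when needed, absorbing $\xi=0$ into the successor step.

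\textbf{Successor step.} Suppose the claim holds for all $\gamma<\xi$ and let $A\in{\mathbf P}^\nu_\xi$, so $A = \bigcap_{\eta<\nu}(X\setminus B_\eta)$ with each $B_\eta\in{\mathbf P}^\nu_{\gamma_\eta}$, $\gamma_\eta<\xi$. By induction, choose $\tau_{\gamma_\eta}$-closed $F_\eta$ with $B_\eta\setminus F_\eta$ being $(\gamma_\eta)_+$-slight and $F_\eta\setminus B_\eta$ being $(\gamma_\eta)_-$-slight. The natural candidate is to build $F$ from the $\tau_\xi$-interiors of the complements: set $G_\eta = \operatorname{int}_{\tau_\xi}(X\setminus F_\eta)$, which is $\tau_\xi$-open since $F_\eta$ is $\tau_{\gamma_\eta}$-closed hence $\tau_\xi$-closed, and let $F = \bigcap_{\eta<\nu} \operatorname{cl}_{\tau_\xi}(G_\eta)$ or more simply $F = X\setminus\bigcup_{\eta<\nu} G_\eta$, which is $\tau_\xi$-closed. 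Then one must show $A\setminus F$ is $\xi_+$-slight and $F\setminus A$ is $\xi_-$-slight. For $F\setminus A = \bigcup_\eta (B_\eta \setminus G_\eta) \cap F$-type computation: $x\in F\setminus A$ means $x\notin\bigcup G_\eta$ and $x\in B_\eta$ for some $\eta$; since $x\in B_\eta$ and $F_\eta\setminus B_\eta$ is small, roughly $x\in F_\eta$ off a $(\gamma_\eta)_-$-slight set, while $x\notin G_\eta = \operatorname{int}_{\tau_\xi}(X\setminus F_\eta)$; the weak filtration property applied to the $\tau_{\gamma_\eta}$-closed set $F_\eta$ gives $\operatorname{int}_{\tau_\xi}(F_\eta)$ is $\tau$-dense in $\operatorname{int}_\tau(F_\eta)$, so the part of $F_\eta$ not in $G_\eta$ modulo $\operatorname{int}_{\tau_\xi}(F_\eta)$ is controlled, and $\operatorname{cl}_{\tau_\xi}(\text{something})\setminus\operatorname{int}$ contributes $\tau_\xi$-closed-with-empty-$\tau$-interior pieces, i.e. $\xi_-$-slight pieces. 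This is the crux. For $A\setminus F$: $x\in A\setminus F$ means $x\in \bigcup G_\eta$, so locally in some $\tau_\xi$-open $G_\eta$, and $x\notin B_\eta$ (as $x\in A$) while $x\in G_\eta\subseteq X\setminus F_\eta$, so $x\in (X\setminus F_\eta)\setminus B_\eta$... but $A\subseteq X\setminus B_\eta$ so actually $x\notin B_\eta$ automatically; we need $x$ to lie in the $(\gamma_\eta)_+$-slight set $B_\eta\setminus F_\eta$ --- wait, $x\notin B_\eta$, so that's empty. Let me recompute: we want $A\setminus F$ small; on $G_\eta$, $A\cap G_\eta \subseteq G_\eta\setminus F_\eta \cap A$; hmm $G_\eta\subseteq X\setminus F_\eta$ and $A\subseteq X\setminus B_\eta$, and $F_\eta\setminus B_\eta$ small says $X\setminus B_\eta \subseteq (X\setminus F_\eta)\cup(\text{small})$, not directly helpful. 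The real point must be that $A\setminus F$ is covered by the $\tau_\xi$-open sets $G_\eta$ and on each, $A\cap G_\eta$ is $\xi_-$-slight because it sits inside $G_\eta\setminus F_\eta$ and one shows $G_\eta\setminus F_\eta$ together with $B_\eta\setminus F_\eta$... I think the clean statement is: $X\setminus F_\eta = G_\eta \cup (\operatorname{cl}_{\tau_\xi}(G_\eta)\setminus G_\eta) \cup (\text{nowhere }\tau_\xi\text{-dense remainder})$, and on $G_\eta$ we have $X\setminus F_\eta \supseteq$ nothing new, so $A\cap G_\eta \subseteq (X\setminus B_\eta)\cap G_\eta$ and we use that $(X\setminus B_\eta)\setminus(X\setminus F_\eta) = F_\eta\setminus B_\eta$ is $(\gamma_\eta)_-$-slight --- no. I'll need to be careful in the writeup; the asymmetry between $\xi_-$ and $\xi_+$ in the two conclusions is exactly designed to make this bookkeeping close, and the $\tau_\xi$-locality in the definition of $\xi_+$-slight is what lets $A\setminus F$ be handled via the cover $\{G_\eta\}$.

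\textbf{Limit step.} If $\xi$ is a limit ordinal $<\nu^+$, then ${\mathbf P}^\nu_\xi$ still consists of intersections of $\leq\nu$ complements of sets in $\bigcup_{\gamma<\xi}{\mathbf P}^\nu_\gamma$, so the argument is uniform with the successor case---there is no genuinely separate limit case for ${\mathbf P}^\nu_\xi$ itself, only for $\alpha$ in the main theorem, which is not at issue here. So the induction is really a single step from $<\xi$ to $\xi$.

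\textbf{Main obstacle.} The hard part is the successor step, specifically showing $F\setminus A$ is $\xi_-$-slight: this is where the weak filtration hypothesis \eqref{E:intap} enters, and one must carefully decompose $F_\eta$ (for each $\eta<\nu$) into a $\tau_\xi$-open part $\operatorname{int}_{\tau_\xi}(F_\eta)$ (which is $\tau$-dense in $\operatorname{int}_\tau(F_\eta)$) and a $\tau_\xi$-closed boundary part with empty $\tau$-interior (an $\xi_-$-slight generator), then union $\leq\nu$ of these and absorb the $(\gamma_\eta)_-$-slight errors $F_\eta\setminus B_\eta$, all while checking the intersection-vs-union combinatorics goes through. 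The choice $F = X\setminus\bigcup_\eta\operatorname{int}_{\tau_\xi}(F_\eta)$ versus a closure-based variant, and getting the relation between $\operatorname{int}_{\tau_\xi}(F_\eta)$ and $G_\eta=\operatorname{int}_{\tau_\xi}(X\setminus F_\eta)$ right (their union is $\tau_\xi$-dense, complement $\tau_\xi$-nowhere dense hence $\tau$-nowhere dense), is the delicate design decision. Everything else---closure under $\leq\nu$ unions and subsets of the slight families, Lemma~\ref{L:inclu}(i) for bumping $(\gamma)_\pm$ up to $\xi$---is routine bookkeeping.
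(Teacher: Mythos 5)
There is a genuine gap: the successor step is never actually carried out, and the set $F$ you commit to in the computation is the wrong one. With $A=\bigcap_{\eta<\nu}(X\setminus B_\eta)$ and $F_\eta$ a $\tau_{\gamma_\eta}$-closed approximation of $B_\eta$, note that $F_\eta$ is already $\tau_\xi$-closed (since $\tau_{\gamma_\eta}\subseteq\tau_\xi$), so your $G_\eta=\operatorname{int}_{\tau_\xi}(X\setminus F_\eta)$ is just $X\setminus F_\eta$, and your ``simpler'' choice $F=X\setminus\bigcup_\eta G_\eta$ collapses to $\bigcap_\eta F_\eta$. That set approximates $\bigcap_\eta B_\eta$, not $A$: already for a single $\sigma$-closed $B=B_0$ with nonempty interior one gets $F=F_0=B$ while $A=X\setminus B$, so $F\setminus A=B$ is nowhere near $\xi_-$-slight. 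Your other candidate, $\bigcap_\eta\operatorname{cl}_{\tau_\xi}(G_\eta)=X\setminus\bigcup_\eta\operatorname{int}_{\tau_\xi}(F_\eta)$, is \emph{not} equivalent to the first, and it is in fact the right one: $A\setminus F$ is then covered by the $\tau_\xi$-open sets $\operatorname{int}_{\tau_\xi}(F_\eta)$, on each of which it sits inside $A\cap F_\eta\subseteq F_\eta\setminus B_\eta$ (inductively $(\gamma_\eta)_-$-slight), giving $\xi_+$-slightness; and $F\setminus A\subseteq\bigcup_\eta\bigl[(B_\eta\setminus F_\eta)\cup\bigl(F_\eta\setminus\operatorname{int}_{\tau_\xi}(F_\eta)\bigr)\bigr]$, where the second piece is a $\tau_\xi$-closed set whose $\tau$-interior is empty precisely by the weak filtration property \eqref{E:intap} applied to the $\tau_{\gamma_\eta}$-closed set $F_\eta$, i.e., a generator of the $\xi_-$-slight family. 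This is exactly the verification your writeup breaks off from (``I'll need to be careful in the writeup''), and it is the entire content of the lemma. The base case is also confused: in the context of the lemma the classes ${\mathbf P}^\nu_\xi$ are computed with respect to $\sigma=\tau_0$, so a set in ${\mathbf P}^\nu_0$ is $\sigma$-closed and $F=A$ works outright; no absorption into the successor step is needed.

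For comparison, the paper does not build $F$ recursively from approximations of the $B_\eta$. It takes the canonical $\tau_\xi$-closed set $c_\xi(A)=X\setminus\bigcup\{U\mid U\ \tau_\xi\text{-open},\ A\cap U\ \xi_-\text{-slight}\}$, for which $A\setminus c_\xi(A)$ is $\xi_+$-slight immediately from the definitions, and proves $c_\xi(A)\setminus A$ is $\xi_-$-slight by induction, reducing to showing that $c_\xi(A)\cap c_{\gamma_\eta}(B_\eta)$ has empty $\tau$-interior via a density argument that invokes \eqref{E:intap} in the same way. Your overall plan (induction on $\xi$, weak filtration supplying $\tau$-dense $\tau_\xi$-open interiors, locality of $\xi_+$-slightness handling $A\setminus F$ via a cover) is sound and, once the construction of $F$ is corrected, yields a valid and somewhat more explicit alternative proof; but as written the proposal neither fixes the construction nor completes the decisive step.
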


\begin{proof} For $A$ in ${\mathbf P}^\nu_\xi$, with $\xi< \nu^+$, we define 
\[
c_\xi(A) = X\setminus \bigcup\{ U\mid A\cap U\hbox{ is $\xi_-$-slight and $U$ is $\tau_\xi$-open}\}. 
\]
Obviously the set $c_\xi(A)$ is $\tau_\xi$-closed. 

To prove the lemma, it is sufficient to show that if $A$ is in ${\mathbf P}^\nu_\xi$, then 
\begin{enumerate}
\item[(a)] $A\setminus c_\xi(A)$ is $\xi_+$-slight; 

\item[(b)] $c_\xi(A)\setminus A$ is $\xi_-$-slight.
\end{enumerate}
Point (a) is clear from the definition of $\xi_+$-slight sets. We prove (b) by induction on $\xi$. 

If $\xi=0$, then $A$ is in ${\mathbf P}^\nu_0$, that is, $A$ is $\sigma$-closed 
and $c_0(A)=A$. So $c_0(A)\setminus A=\emptyset$, which is $0_-$-slight. 

Assume that $\xi>0$ and that (b) holds for all $\gamma<\xi$. Fix $A$ in ${\mathbf P}^\nu_\xi$.
There exists a sequence 
$B_\eta$, $\eta<\nu$, with $B_\eta$ in ${\mathbf P}^\nu_{\gamma_\eta}$,
for some $\gamma_\eta<\xi$, 
such that 
\[
X\setminus A = \bigcup_{\eta<\nu} B_\eta. 
\]
We have 
\[
c_\xi(A)\setminus A = c_\xi(A)\cap \bigcup_{\eta<\nu} B_\eta \subseteq \bigcup_{\eta<\nu} \bigl(c_\xi(A) \cap c_{\gamma_\eta}(B_\eta)\bigr) \cup \bigl( B_\eta\setminus c_{\gamma_\eta}(B_\eta)\bigr).  
\] 
Since $\xi_-$-slight sets are closed under unions of $\nu$ sets, it suffices to show that, for each $\eta<\nu$,
\begin{enumerate}
\item[($\alpha$)] $B_\eta\setminus c_{\gamma_\eta}(B_\eta)$ is $\xi_-$-slight and 

\item[($\beta$)] $c_\xi(A)\cap c_{\gamma_\eta}(B_\eta)$ is $\xi_-$-slight. 
\end{enumerate}

Point ($\alpha$) is clear since, by our inductive assumption, 
$B_\eta\setminus c_{\gamma_\eta}(B_\eta)$ 
is $(\gamma_\eta)_+$-slight and, therefore, by Lemma~\ref{L:inclu}(i), $\xi_-$-slight as $\gamma_\eta<\xi$. 

We prove ($\beta$). Since the set $c_\xi(A)\cap c_{\gamma_\eta}(B_\eta)$ is $\tau_\xi$-closed, by definition of $\xi_-$-slight sets, it suffices to show that 
\begin{equation}\label{E:abe}
{\rm int}_\tau\big(c_\xi(A)\cap c_{\gamma_\eta}(B_\eta)\big)=\emptyset. 
\end{equation} 

We make two observations. First, let $V$ be a $\tau_\xi$-open set such that 
\begin{equation}\notag
V\cap c_\xi(A)\not=\emptyset. 
\end{equation} 
Then, by the definition of $c_\xi(A)$, we see that $A\cap V$ is not $\tau_\xi$-locally $\xi_-$-slight. Thus, by the definition of $\xi_+$-sets, we get that, for each $\tau_\xi$-open set $V$,
\begin{equation}\label{E:vla}
V\cap c_\xi(A)\not=\emptyset \Rightarrow A\cap V \hbox{ is not $\xi_+$-slight}. 
\end{equation} 
Second, we note that 
\[
A\cap c_{\gamma_\eta}(B_\eta)\subseteq c_{\gamma_\eta}(B_\eta)\setminus B_\eta, 
\]
so, by our inductive assumption, $A\cap c_{\gamma_\eta}(B_\eta)$ is $(\gamma_\eta)_-$-slight. Since $\gamma_\eta<\xi$, it follows by Lemma~\ref{L:inclu}(i) that 
\begin{equation}\label{E:asm}
A\cap c_{\gamma_\eta}(B_\eta)\hbox{ is $\xi_+$-slight}. 
\end{equation} 

Now, to prove \eqref{E:abe}, assume towards a contradiction that there exists a nonempty $\tau$-open set $U$ such that 
\[
U\subseteq c_\xi(A)\;\hbox{ and }\; U\subseteq c_{\gamma_\eta}(B_\eta). 
\]
Since $c_{\gamma_\eta}(B_\eta)$ is $\tau_{\gamma_\eta}$-closed and $\gamma_\eta<\xi$, by definition of weak filtration, there exists a $\tau_\xi$-open set $V$ such that 
\[
V\subseteq c_{\gamma_\eta}(B_\eta)\;\hbox{ and }\; V\cap U\not=\emptyset. 
\]
Then, by \eqref{E:asm}, $A\cap V$ is $\xi_+$-slight. 
By \eqref{E:vla}, this implies that 
$V\cap c_\xi(A)=\emptyset$ contradicting $U\subseteq c_\xi(A)$ and $V\cap U\not=\emptyset$. 
\end{proof}

\begin{lemma}\label{L:stab}
Let $A$ be in ${\mathbf P}^\nu_\xi$,
for some $\xi<\nu^+$, and let $B\subseteq A$ be such that $B\cap U$ is not $\xi_+$-slight, for each $\tau_\xi$-open set $U$ with $B\cap U\not=\emptyset$. Then ${\rm cl}_{\tau_\xi}(B)\setminus A$ is $\xi_-$-slight. 
\end{lemma}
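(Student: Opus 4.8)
\textbf{Proof plan for Lemma~\ref{L:stab}.}

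The plan is to combine Lemma~\ref{L:slal} with the hypothesis on $B$ via the closure operator $c_\xi$ introduced in the proof of Lemma~\ref{L:slal}. First I would recall that $c_\xi(A)$ is $\tau_\xi$-closed, that $A\setminus c_\xi(A)$ is $\xi_+$-slight, and that $c_\xi(A)\setminus A$ is $\xi_-$-slight (this is exactly what the proof of Lemma~\ref{L:slal} establishes). The key claim to isolate is that $B\subseteq c_\xi(A)$, and more: that every $\tau_\xi$-open set $V$ meeting ${\rm cl}_{\tau_\xi}(B)$ also meets $c_\xi(A)$. Indeed, if $V$ is $\tau_\xi$-open and $V\cap {\rm cl}_{\tau_\xi}(B)\neq\emptyset$, then $V\cap B\neq\emptyset$, so by hypothesis $B\cap V$ is not $\xi_+$-slight; since $B\cap V\subseteq A\cap V$, the set $A\cap V$ is not $\xi_+$-slight either, hence in particular not $\xi_-$-slight, so $V$ cannot be one of the $\tau_\xi$-open sets removed in the definition of $c_\xi(A)$, which forces $V\cap c_\xi(A)\neq\emptyset$. (Taking $V$ to range over $\tau_\xi$-neighborhoods of a point of $B$ and using that $c_\xi(A)$ is $\tau_\xi$-closed gives $B\subseteq c_\xi(A)$, hence ${\rm cl}_{\tau_\xi}(B)\subseteq c_\xi(A)$.)

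Once $B\subseteq c_\xi(A)$ — hence ${\rm cl}_{\tau_\xi}(B)\subseteq c_\xi(A)$ by $\tau_\xi$-closedness — we get
\[
{\rm cl}_{\tau_\xi}(B)\setminus A\subseteq c_\xi(A)\setminus A,
\]
and the right-hand side is $\xi_-$-slight by Lemma~\ref{L:slal} (point (b) in its proof). Since $\xi_-$-slight sets are closed under taking subsets, ${\rm cl}_{\tau_\xi}(B)\setminus A$ is $\xi_-$-slight, which is the conclusion.

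The only subtlety — and the step I would be most careful about — is the verification that ${\rm cl}_{\tau_\xi}(B)\subseteq c_\xi(A)$ rather than merely $B\subseteq c_\xi(A)$; but since $c_\xi(A)$ is $\tau_\xi$-closed and contains $B$, this is immediate, so there is really no obstacle here. A secondary point to state cleanly is the implication ``not $\xi_+$-slight $\Rightarrow$ not $\xi_-$-slight,'' which holds because every $\xi_-$-slight set is $\xi_+$-slight (a $\xi_-$-slight set $S$ is $\tau_\xi$-locally $\xi_-$-slight via the single open cover $\{X\}$, using that $S\cap X = S$). With these observations in place the argument is short; the real content has already been packaged into $c_\xi$ and Lemma~\ref{L:slal}.
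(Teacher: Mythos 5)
Your argument is correct and is essentially the paper's proof: the paper applies Lemma~\ref{L:slal} as a black box to obtain a $\tau_\xi$-closed set $F$ with $A\setminus F$ $\xi_+$-slight and $F\setminus A$ $\xi_-$-slight, observes that a nonempty $B\setminus F=B\cap(X\setminus F)$ would be a nonempty intersection of $B$ with a $\tau_\xi$-open set that is contained in the $\xi_+$-slight set $A\setminus F$ (contradicting the hypothesis on $B$), hence $B\subseteq F$, ${\rm cl}_{\tau_\xi}(B)\subseteq F$, and ${\rm cl}_{\tau_\xi}(B)\setminus A\subseteq F\setminus A$; your version just instantiates $F$ as the explicit witness $c_\xi(A)$ from the proof of that lemma. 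The one loose step is the inference ``$V$ is not one of the removed sets, hence $V\cap c_\xi(A)\neq\emptyset$'' --- a set can be covered by the removed sets without being one of them --- but this is harmless, since the stronger fact you already derived, that $A\cap V$ is not $\xi_+$-slight, does force $V\cap c_\xi(A)\neq\emptyset$ (this is exactly implication \eqref{E:vla} in the paper), and for the containment $B\subseteq c_\xi(A)$ the pointwise argument (a point of $B$ outside $c_\xi(A)$ lies in some removed $\tau_\xi$-open $U$, making $B\cap U$ a nonempty subset of the $\xi_-$-slight, hence $\xi_+$-slight, set $A\cap U$) suffices.
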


\begin{proof}
Let $A$ and $B\subseteq A$ be as in the assumptions. By Lemma~\ref{L:slal}, there exists a $\tau_\xi$-closed 
set $F$ such that $A\setminus F$ is $\xi_+$-slight and $F\setminus A$ is $\xi_-$-slight. The assumption on $B$ gives that $B\setminus F$ is empty, so $B\subseteq F$. 
Since $F$ is $\tau_\xi$-closed, we have ${\rm cl}_{\tau_\xi}(B)\subseteq F$, which gives 
\[
{\rm cl}_{\tau_\xi}(B)\setminus A\subseteq F\setminus A. 
\] 
It follows that ${\rm cl}_{\tau_\xi}(B)\setminus A$ is $\xi_-$-slight. 
\end{proof}

{\bf From this point on, we assume that $(\tau_\xi)_{\xi<\kappa}$ is a filtration from $\sigma$ to $\tau$}.

\begin{lemma}\label{L:last}
Let $\gamma<\beta <\nu^+$. Let $A$ be in ${\mathbf P}^\nu_\gamma$,
and let $B$ be $\tau$-open and such that 
$B\setminus A$ is $\kappa$-meager. Then there exists a $\tau_\beta$-open set $V$ such that 
\begin{equation}\label{E:xvm} 
B\subseteq V\,\hbox{ and }\,V\setminus A\hbox{ is $\kappa$-meager}. 
\end{equation} 
\end{lemma}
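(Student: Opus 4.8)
The plan is to induct on $\gamma$, peeling off one layer of the $\mathbf P$-hierarchy at a time, and to use Lemma~\ref{L:stab} together with the $\kappa$-Baireness of $\tau$ as the engine that upgrades "$B\setminus A$ is $\kappa$-meager" to the conclusion about a $\tau_\beta$-open $V$. The base case $\gamma=0$ should be essentially trivial: then $A$ is $\sigma$-closed, hence $\tau_0$-closed, hence $\tau_\beta$-closed, and $B$ is $\tau$-open with $B\setminus A$ meager; since $\tau$ is $\kappa$-Baire, a $\tau$-nowhere dense set cannot contain a nonempty $\tau$-open set, so $B\subseteq \operatorname{cl}_\tau(A)=A$ already, and we may take $V=B$ (which is $\tau$-open, hence $\tau_\beta$-open, and $V\setminus A=\emptyset$). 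Wait — more carefully, $B\setminus A$ being merely $\kappa$-meager rather than nowhere dense means $B\subseteq \operatorname{cl}_\tau(A\cap B)$ only off a $\kappa$-meager set; so the honest base case still needs the argument below, just with $\xi=0$.

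For the inductive step at $\gamma$, I would set $\xi=\gamma$ and let $B$ be $\tau$-open with $B\setminus A$ $\kappa$-meager. The key idea is to let
\[
B' = B \setminus \operatorname{cl}_\tau\bigl(\{\, x\in B : B\cap W \text{ is } \xi_+\text{-slight for some } \tau_\xi\text{-open } W\ni x \,\}\bigr),
\]
or, more cleanly, to take a maximal family $\mathcal U$ of $\tau_\xi$-open sets $U$ with $U\cap B \neq \emptyset$ and $B\cap U$ being $\xi_+$-slight, and set $B_0 = B\cap\bigcup\mathcal U$. Since each $B\cap U$ is $\xi_+$-slight, hence $\kappa$-meager by Lemma~\ref{L:inclu}(ii), and this closes under the $\tau$-open covering operation used in the proof of that lemma, $B_0$ is $\kappa$-meager. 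Then $B^* := B\setminus \operatorname{cl}_{\tau_\xi}(B_0)$ is a $\tau_\xi$-open set (it is $\tau$-open intersected with a $\tau_\xi$-open set, but actually I want it $\tau_\xi$-open, so I should instead work inside $B$ with its relative $\tau_\xi$-topology) with the property that $B^*\cap U$ is not $\xi_+$-slight for every $\tau_\xi$-open $U$ meeting it — this is exactly the hypothesis of Lemma~\ref{L:stab} for the set $B' = B^*\cap A$ wait, we need $B'\subseteq A$.

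Here is the cleaner route. Because $B\setminus A$ is $\kappa$-meager and $\tau$ is $\kappa$-Baire, $A\cap B$ is $\tau$-dense in $B$; set $B' = A\cap B$. Apply Lemma~\ref{L:stab} after first shrinking: let $\mathcal U$ be a maximal family of pairwise disjoint $\tau_\xi$-open sets $U$ with $B'\cap U$ $\xi_+$-slight, put $W=\bigcup\mathcal U$; then $B'\cap W$ is $\xi_+$-slight (by the covering-union argument, since the $U$'s are disjoint and we can take a single $\tau_\xi$-open cover), hence $\kappa$-meager, and for $U_0 := $ the $\tau_\xi$-interior of the complement of $\operatorname{cl}_{\tau_\xi}(W)$ intersected appropriately, the set $B'\setminus \operatorname{cl}_{\tau_\xi}(W)$ satisfies the Lemma~\ref{L:stab} hypothesis. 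So by Lemma~\ref{L:stab}, $\operatorname{cl}_{\tau_\xi}\bigl(B'\setminus\operatorname{cl}_{\tau_\xi}(W)\bigr)\setminus A$ is $\xi_-$-slight, hence $\kappa$-meager. Now take
\[
V := \operatorname{int}_{\tau_\xi}\Bigl(\operatorname{cl}_{\tau_\xi}\bigl(B'\setminus\operatorname{cl}_{\tau_\xi}(W)\bigr)\Bigr);
\]
this is $\tau_\xi$-open, hence $\tau_\beta$-open since $\xi=\gamma<\beta$. One checks $B\subseteq V \cup (\kappa\text{-meager})$: indeed $B\setminus A$ is $\kappa$-meager, $B'\cap W$ is $\kappa$-meager, and $B'\setminus\operatorname{cl}_{\tau_\xi}(W)$ is $\tau$-dense in the $\tau$-open set $B\setminus\operatorname{cl}_{\tau_\xi}(W)$, so that $\tau$-open set lies in $\operatorname{cl}_{\tau}(\cdots)\subseteq\operatorname{cl}_{\tau_\xi}(\cdots)$, and since filtration property \eqref{E:intap2} makes $\tau_\beta$ (or $\tau_\xi$) compute interiors of $\tau_\xi$-closed sets correctly, $B\setminus\operatorname{cl}_{\tau_\xi}(W)\subseteq V$ up to the difference between $\operatorname{cl}$ and $\operatorname{int}_{\tau_\xi}\operatorname{cl}$, which is $\tau$-nowhere dense. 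And $V\setminus A\subseteq \bigl(\operatorname{cl}_{\tau_\xi}(B'\setminus\operatorname{cl}_{\tau_\xi}(W))\setminus A\bigr)$ is $\kappa$-meager. Replacing $V$ by $V\cup B$ if one insists on literal containment $B\subseteq V$: but $B\cup V$ need not be $\tau_\beta$-open; instead note $B\setminus V$ is $\kappa$-meager and $\tau$-open, hence empty by $\kappa$-Baireness of $\tau$ — wait, $B\setminus V$ is $\tau$-open only if $V$ is $\tau$-closed, which it is not. The fix: $B\setminus V$ is contained in a $\kappa$-meager set and is a subset of the $\tau$-open set $B$; by $\kappa$-Baireness its $\tau$-interior, relative to $B$, is empty, but that does not make it empty. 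So instead I should arrange from the start that $V\supseteq B\setminus\operatorname{cl}_{\tau_\xi}(W)$ and absorb the rest: since $B\cap W$ is $\kappa$-meager and $B\setminus A$ is $\kappa$-meager, $B\setminus V \subseteq (B\cap W)\cup(B\setminus A)\cup(\tau\text{-nwd})$ is $\kappa$-meager, and then replacing $V$ with $V\cup B$ is not needed for the statement — rereading \eqref{E:xvm}, it does demand $B\subseteq V$ literally. The genuine fix is to note $B$ is $\tau$-open and $B\setminus V$ $\kappa$-meager, so each point of $B$ is in $\operatorname{cl}_\tau(V)$, but we need it in $V$; so take $V' = V\cup B$ and observe $V' = V\cup(B\setminus\operatorname{cl}_{\tau_\xi}(W))\cup(\text{rest})$ where the "rest" is $\kappa$-meager, so $V'$ differs from the $\tau_\beta$-open set $V\cup(B\setminus\operatorname{cl}_{\tau_\xi}(W))$ — hmm, $B\setminus\operatorname{cl}_{\tau_\xi}(W)$ is $\tau$-open but need not be $\tau_\beta$-open.

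**The main obstacle.** The delicate point — and the step I expect to consume the real work — is exactly this reconciliation: Lemma~\ref{L:stab} only gives me a $\tau_\xi$-\emph{closed} set $F\supseteq B'$ with $F\setminus A$ small, and I must convert "$B'$ is $\tau$-dense in the $\tau$-open set $B$" plus "$B'\subseteq F$" into "a $\tau_\beta$-open set $V$ with $B\subseteq V$ and $V\setminus A$ $\kappa$-meager", using the filtration property \eqref{E:intap2} that $\tau_\beta$ computes $\operatorname{int}_\tau(F)$ correctly (since $F$ is $\tau_\xi$-closed, $\xi<\beta$). The right $V$ should be $\operatorname{int}_{\tau_\beta}(F)=\operatorname{int}_\tau(F)$; then $V\setminus A\subseteq F\setminus A$ is $\kappa$-meager, and $B\setminus A\cup(B\cap W)$ being $\kappa$-meager with $\tau$ $\kappa$-Baire forces the $\tau$-open set $B$ to satisfy $B\setminus\operatorname{int}_\tau(F)$ $\kappa$-meager, hence — since $B\setminus\operatorname{int}_\tau(F)$ is not $\tau$-open, one argues pointwise that $B\subseteq\operatorname{cl}_\tau(B\cap\operatorname{int}_\tau(F))\subseteq\operatorname{cl}_\tau(\operatorname{int}_\tau(F))$, and \emph{here is where semiregularity-type behavior or a further shrink is needed}, but actually the statement only requires $V\setminus A$ $\kappa$-meager and $B\subseteq V$, so I should take $V=\operatorname{int}_\tau(F)\cup B$, check it equals $\operatorname{int}_{\tau_\beta}(F)\cup B$; since $B$ is $\tau$-open hence $\tau_\beta$-open, $V$ is $\tau_\beta$-open, and $V\setminus A = (\operatorname{int}_\tau(F)\setminus A)\cup(B\setminus A)$ is $\kappa$-meager. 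That closes the step, and the induction through $\mathbf P^\nu_\gamma$ (handling the union over $\eta<\nu$ in the successor-type definition of $\mathbf P^\nu_\gamma$ by applying the inductive hypothesis to each $B_\eta$ and reassembling, exactly as in the proof of Lemma~\ref{L:slal}) then finishes the lemma. The bookkeeping for the $\nu$-indexed unions and for limit $\gamma$ is routine given Lemma~\ref{L:inclu}(i).
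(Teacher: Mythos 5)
There is a genuine gap, and it sits exactly where you flagged ``the main obstacle.'' Your overall skeleton is right: apply Lemma~\ref{L:stab} to $A$ and the subset $A\cap B$ to obtain a $\tau_\gamma$-closed set $F$ with $F\setminus A$ slight (hence $\kappa$-meager by Lemma~\ref{L:inclu}(ii)), and take $V={\rm int}_{\tau_\beta}(F)={\rm int}_\tau(F)$ via \eqref{E:intap2}. Two side remarks before the main point. First, your preliminary shrinking by the maximal family $\mathcal U$ is vacuous: for every $\tau_\gamma$-open $U$ meeting $B$, the set $B\cap U$ is nonempty $\tau$-open and $(B\cap U)\setminus A$ is $\kappa$-meager, so by $\kappa$-Baireness $A\cap B\cap U$ is non-$\kappa$-meager and hence not $\gamma_+$-slight; thus $A\cap B$ already satisfies the hypothesis of Lemma~\ref{L:stab} and no induction on $\gamma$ is needed at all. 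Second, your closing sentence about reassembling over the $\eta<\nu$ union would not go through as stated, since an element of ${\mathbf P}^\nu_\gamma$ is an \emph{intersection} of complements of lower-level sets, not a union of sets to which an inductive hypothesis for the lemma could be applied directly.

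The gap is the containment $B\subseteq V$. Your final fix takes $V={\rm int}_\tau(F)\cup B$ and declares it $\tau_\beta$-open ``since $B$ is $\tau$-open hence $\tau_\beta$-open.'' This is backwards: $\tau_\beta\subseteq\tau$, so $\tau$ is the \emph{finer} topology and a $\tau$-open set has no reason to be $\tau_\beta$-open---if it did, the whole lemma would be trivial (take $V=B$). The same slip occurs in your base case. The correct resolution requires no adjustment of $V$, provided you choose $F={\rm cl}_{\tau_\gamma}(A\cap B)$: since $\tau$ is $\kappa$-Baire and $B\setminus A$ is $\kappa$-meager, $A\cap B$ is $\tau$-dense in the $\tau$-open set $B$, so $B\subseteq{\rm cl}_\tau(A\cap B)\subseteq{\rm cl}_{\tau_\gamma}(A\cap B)=F$, the second inclusion holding because $\tau_\gamma$ is coarser and hence its closures are larger. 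As $B$ is $\tau$-open and $B\subseteq F$, we get $B\subseteq{\rm int}_\tau(F)$, and \eqref{E:intap2} (applied to the $\tau_\gamma$-closed set $F$ with $\gamma<\beta$) gives ${\rm int}_{\tau_\beta}(F)={\rm int}_\tau(F)\supseteq B$. This is the one step your write-up circles around but never lands on; with it, the argument closes and coincides with the intended proof.
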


\begin{proof} Let 
\[
F={\rm cl}_{\tau_\gamma}(A\cap B)\hbox{ and }\, V={\rm int}_{\tau_\beta}(F). 
\]
Clearly $V$ is $\tau_\beta$-open. We show that \eqref{E:xvm} holds for it.

By our assumption $B\setminus A$ is $\kappa$-meager, so for each $\tau_\gamma$-open set $U$, 
$(B\cap U)\setminus A$ is $\kappa$-meager. 
So, by Lemma~\ref{L:inclu}(ii) and by our assumption that $\tau$ is $\kappa$-Baire, we get that, 
for each $\tau_\gamma$ open set $U$, if $B\cap U\not=\emptyset$, then $A\cap B\cap U$ is not $\gamma_+$-slight. Thus, since $A\cap B\subseteq A$, by Lemma~\ref{L:stab}, $F\setminus A$ is $\gamma_-$-slight. Now, by Lemma~\ref{L:inclu}(ii), we get 
\begin{equation}\notag
F\setminus A\hbox{ is $\kappa$-meager.}
\end{equation} 
Since 
\[
V \setminus A\subseteq F\setminus A, 
\]
we see that $V\setminus A$ is $\kappa$-meager and the second part of \eqref{E:xvm} follows. .

Observe that since $\tau$ is $\kappa$-Baire and $B$ is $\tau$-open, $A\cap B$ is $\tau$-dense in $B$, so $B\subseteq F$ as $\tau_\gamma\subseteq \tau$. Since $\gamma<\beta$ and $(\tau_\xi)_{\xi<\kappa}$ is a filtration, and since $B$ is $\tau$-open, we have 
\begin{equation}\label{E:atop}
V= {\rm int}_{\tau_\beta}(F) = {\rm int}_{\tau}(F)\supseteq B, 
\end{equation} 
and \eqref{E:xvm} follows. 
\end{proof}

\subsection{Proofs of Theorems~\ref{T:stab2} and \ref{T:stab3}}\label{Su:endg} 

We start with Theorem~\ref{T:stab2}.

\begin{proof}[Proof of Theorem~\ref{T:stab2}]
Set 
\[
\tau^\alpha = \bigvee_{\xi<\alpha\oplus 1} \tau_\xi.
\]

We show that each $\tau$-regular open set is $\tau^\alpha$-open. Proving this statement suffices since $\tau$ is assumed to be semiregular. 

\begin{claim}
If $A\subseteq X$ is a $\tau$-neighborhood of $x$, then $A$ is $\kappa$-comeager in a $\tau^\alpha$-neighborhood of $x$. 
\end{claim}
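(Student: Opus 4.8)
The plan is to exploit the hypothesis that $\tau$ has a neighborhood basis in $\bigcup_{\xi<\alpha}{\mathbf \Pi}^{\kappa,0}_{1+\xi}$ with respect to $\sigma$. Given that $A$ is a $\tau$-neighborhood of $x$, I would first shrink $A$: pick a basic neighborhood $N$ of $x$ with $N\subseteq A$ and $N\in {\mathbf \Pi}^{\kappa,0}_{1+\xi}$ for some $\xi<\alpha$, so that $N$ lies in some ${\mathbf P}^\nu_\xi$ with $\nu<\kappa$ and $\xi<\nu^+$. From here it suffices to produce a $\tau^\alpha$-neighborhood $V$ of $x$ in which $N$ (hence $A$) is $\kappa$-comeager. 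Fix $\nu$ as the cardinal witnessing $N\in{\mathbf P}^\nu_\xi$; the $\xi_-$/$\xi_+$-slight machinery of Section~\ref{Su:aux} is then available for this $\nu$, and by Lemma~\ref{L:inclu}(ii) all slight sets are $\kappa$-meager.

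The key step is to apply Lemma~\ref{L:last}. Let $B_0 = {\rm int}_\tau(N)$, a nonempty $\tau$-open set containing $x$ (since $N$ is a neighborhood of $x$); then $B_0\setminus N=\emptyset$ is trivially $\kappa$-meager, and $N\in{\mathbf P}^\nu_\xi$. Choosing any $\beta$ with $\xi<\beta<\nu^+$ and $\beta\leq\kappa$ — here I would take $\beta = \xi\oplus 1 \le \alpha\oplus 1$ when $\alpha$ is a successor, and handle the limit case by noting $\xi<\alpha$ so $\xi+1<\alpha$ works — Lemma~\ref{L:last} (with $\gamma=\xi$) yields a $\tau_\beta$-open set $V$ with $B_0\subseteq V$ and $V\setminus N$ $\kappa$-meager. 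Since $B_0$ contains $x$ and $B_0\subseteq V$, the set $V$ is a $\tau_\beta$-neighborhood of $x$, hence a $\tau^\alpha$-neighborhood of $x$ because $\beta<\alpha\oplus 1$. Finally $V\setminus A \subseteq V\setminus N$ is $\kappa$-meager, i.e.\ $A$ is $\kappa$-comeager in the $\tau^\alpha$-neighborhood $V$ of $x$, which is exactly the claim.

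The main obstacle I anticipate is bookkeeping about the index $\beta$: I must be sure that the chosen $\beta$ satisfies simultaneously $\xi<\beta<\nu^+$ (so Lemma~\ref{L:last} applies with $\nu$ the cardinal witnessing $N\in{\mathbf P}^\nu_\xi$) and $\beta<\alpha\oplus 1$ (so that $\tau_\beta\subseteq\tau^\alpha$). When $\alpha$ is a successor this is automatic from $\xi<\alpha$ via $\beta=\xi+1$; when $\alpha$ is a limit, $\xi+1<\alpha$ still works, and one should double check $\xi+1<\nu^+$, which holds because $\xi<\nu^+$ gives $\xi+1\le\nu^+$ and in fact $\xi<\nu^+$ with $\nu^+$ a cardinal (hence a limit ordinal) forces $\xi+1<\nu^+$. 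A secondary point is that the argument does not yet use semiregularity of $\tau$ — that is invoked only after the Claim, to conclude from "$A$ is $\kappa$-comeager in a $\tau^\alpha$-neighborhood of each of its points" that $\tau$-regular open sets are $\tau^\alpha$-open — so within the Claim itself only $\kappa$-Baireness of $\tau$ (used inside Lemma~\ref{L:last}) is needed.
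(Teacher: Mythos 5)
Your proof is correct and takes essentially the same route as the paper's: reduce to a basic neighborhood $N$ of $x$ lying in some ${\mathbf P}^\nu_\xi$, set $B={\rm int}_\tau(N)\ni x$, and apply Lemma~\ref{L:last} with $\gamma=\xi$ to produce a $\tau_\beta$-open $V\supseteq B$ with $V\setminus N$ (hence $V\setminus A$) $\kappa$-meager. Your uniform choice $\beta=\xi+1$ (where the paper takes $\beta=\alpha$ in the successor case) is a harmless variant, and your check that $\xi+1<\nu^+$ and $\beta<\alpha\oplus 1$ is exactly the bookkeeping needed.
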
 

\noindent{\em Proof of Claim.} We can suppose that $A$ is in ${\mathbf P}^\nu_\xi$, for some cardinal $\nu<\kappa$ and 
some ordinal $\xi$ with $\xi<\alpha$ and $\xi<\kappa^+$. 
Let 
\[
B= {\rm int}_\tau(A).
\]
Note that $B\subseteq A$ is $\tau$-open. We use Lemma~\ref{L:last} to get a $\tau^\alpha$-open set $V$ with \eqref{E:xvm}. 
This can be accomplished since if $\alpha$ is a successor, then $\tau^\alpha=\tau_\alpha$ and $\xi<\alpha$, and we can apply Lemma~\ref{L:last} with $\beta=\alpha$ and $\gamma=\xi$. If $\alpha$ is limit, let $\beta$ be such that $\xi<\beta<\alpha$, and we 
apply Lemma~\ref{L:last} with this $\beta$ and $\gamma=\xi$. 
Since $x\in B$, \eqref{E:xvm} implies  
\begin{equation}\notag
x\in V\;\hbox{ and }\;V\setminus A\hbox{ is $\kappa$-meager}, 
\end{equation} 
and $V$ is the $\tau^\alpha$-neighborhood of $x$ desired by the conclusion of the claim. 

\medskip

Now, we show that 
\begin{equation}\label{E:mcl}
U\subseteq {\rm int}_{\tau^\alpha}\big( {\rm cl}_\tau(U)\big),\hbox{ for each $\tau$-open set $U$.}
\end{equation}
If this is not the case, then there exists $x\in U$ such that $x\not\in {\rm int}_{\tau^\alpha}\big( {\rm cl}_\tau(U)\big)$, that is, 
\begin{equation}\label{E:xin}
x\in {\rm cl}_{\tau^\alpha}\big( X\setminus {\rm cl}_\tau(U)\big).
\end{equation} 
Since $x\in U$, the claim implies that there exists a $\tau^\alpha$-open set $V$ such that $x\in V$ and $V\setminus U$ is $\kappa$-meager. Now, by \eqref{E:xin}, we have 
\[
V\cap \big( X\setminus {\rm cl}_\tau(U)\big)\not= \emptyset.
\]
But this set is $\tau$-open and is included in $V\setminus U$, which, by $\tau$ being $\kappa$-Baire, contradicts $V\setminus U$ being $\kappa$-meager. This contradiction proves \eqref{E:mcl}.  

It follows from \eqref{E:mcl} and from $\tau^\alpha\subseteq \tau$ that for each $\tau$-regular open set $U$, we have 
\[
 {\rm int}_{\tau^\alpha}\big( {\rm cl}_\tau(U)\big)\subseteq {\rm int}_\tau\big( {\rm cl}_\tau(U)\big) = U\subseteq {\rm int}_{\tau^\alpha}\big( {\rm cl}_\tau(U)\big). 
\]
In particular, we have 
\[
U= {\rm int}_{\tau^\alpha}\big( {\rm cl}_\tau(U)\big). 
\]
So, all $\tau$-regular open sets are $\tau^\alpha$-open.
 \end{proof}

We now prove the second main result of the paper.

\begin{proof}[Proof of Theorem~\ref{T:stab3}] 
Fix $0<\alpha\leq \kappa$, and set 
\[
\tau^\alpha = \bigvee_{\xi<\alpha} \tau_\xi.
\]

\begin{claim}
If $A$ is ${\mathbf \Sigma}^{\kappa,0}_{1+\xi}$, for some $\xi<\alpha$, and is $\kappa$-nonmeager, then $A$ is $\kappa$-comeager in a nonempty $\tau^\alpha$-open set. 
\end{claim}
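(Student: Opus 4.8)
The plan is to reduce the statement to Lemma~\ref{L:stab} by unwinding the definition of $\Sigma$ classes and passing to complements. Since $A$ is ${\mathbf \Sigma}^{\kappa,0}_{1+\xi}$ for some $\xi<\alpha$, by Observation~\ref{O:comp}(i) we may fix a cardinal $\nu<\kappa$ with $\xi<\nu^+$ so that $X\setminus A$ lies in ${\mathbf P}^\nu_\xi$. Set $C = X\setminus A$, a set in ${\mathbf P}^\nu_\xi$. Because $A$ is $\kappa$-nonmeager and $\tau$ is $\kappa$-Baire, the set $A$ is non-meager in some nonempty $\tau$-open set, and in fact $\kappa$-comeager in some such set is exactly what we want to produce; the point is that $A$ is "large" somewhere. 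First I would locate a nonempty $\tau_\xi$-open set $W$ such that $A\cap W$ is non-meager in every nonempty $\tau_\xi$-open subset of $W$: take $W$ to be the complement (in a suitable ambient $\tau_\xi$-open set) of the union of all $\tau_\xi$-open sets $U$ on which $A\cap U$ is $\kappa$-meager; this union is itself such a set by the Banach-type argument inside the proof of Lemma~\ref{L:inclu}(ii), so its complement $W$ has the stated property, and $W$ is nonempty because $A$ is $\kappa$-nonmeager. On $W$, for every nonempty $\tau_\xi$-open $U\subseteq W$ the set $A\cap U$ is non-meager, hence by Lemma~\ref{L:inclu}(ii) not $\xi_+$-slight.

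The second step is to apply Lemma~\ref{L:stab}, but to the right pair of sets. We want to contrast $A$ (which is $\Sigma$) with $C = X\setminus A$ (which is in ${\mathbf P}^\nu_\xi$). Put $B = W\setminus A = W\cap C$; I claim $B\subseteq C$ and $B\cap U$ is not $\xi_+$-slight for each $\tau_\xi$-open $U$ with $B\cap U\neq\emptyset$. The first is clear. For the second: if $B\cap U\neq\emptyset$, then $U\cap W$ is a nonempty $\tau_\xi$-open subset of $W$; were $B\cap U$ $\xi_+$-slight it would be $\kappa$-meager by Lemma~\ref{L:inclu}(ii), and since $A\cap(U\cap W)$ is non-meager, the set $(U\cap W) = (A\cap U\cap W)\cup(B\cap U\cap W)$ would still be non-meager — that by itself is not a contradiction, so this naive choice of $B$ needs care. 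The cleaner route is to instead set $B$ to witness the non-$\xi_+$-slightness of $A$ directly: work with $B' = A\cap W$ and the set $A$ in the role of "$A$" of Lemma~\ref{L:stab} only after replacing $A$ by a ${\mathbf P}^\nu$ set. So the correct move is: apply Lemma~\ref{L:stab} with the ${\mathbf P}^\nu_\xi$ set $C=X\setminus A$ and with $B = C\cap W$, after first checking that $B\cap U$ is not $\xi_+$-slight whenever $B\cap U\neq\emptyset$ — which holds because such a $U\cap W$ carries $A$ non-meagerly, so $C\cap U\cap W$, being the complement of $A$ there, is itself large enough; more precisely one uses that a $\xi_+$-slight set is $\kappa$-meager and $\tau$ is $\kappa$-Baire, so a $\kappa$-meager set cannot have nonempty $\tau_\xi$-interior-sense largeness. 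Granting this, Lemma~\ref{L:stab} yields that ${\rm cl}_{\tau_\xi}(B)\setminus C = {\rm cl}_{\tau_\xi}(B)\cap A$ is $\xi_-$-slight, hence $\kappa$-meager by Lemma~\ref{L:inclu}(ii).

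From here the finish mirrors the proof of Theorem~\ref{T:stab2}. Let $F = {\rm cl}_{\tau_\xi}(B)$ and $V = {\rm int}_{\tau_\xi}(F)$; by $\pi$-semiregularity of $\tau$ combined with the filtration condition~\eqref{E:intap2} (which gives ${\rm int}_{\tau_\beta}(F) = {\rm int}_\tau(F)$ for $\xi<\beta<\alpha$, so $V$ is indeed a $\tau^\alpha$-open, in fact $\tau$-open, set inside $W$-related territory), one extracts a nonempty $\tau^\alpha$-open set $V_0\subseteq V$ inside which $A$ is $\kappa$-comeager: indeed on $V_0$ the only part of $A$ present is ${\rm cl}_{\tau_\xi}(B)\cap A$, which we just showed is $\kappa$-meager — wait, that shows $A$ is $\kappa$-\emph{meager} there, so we instead want $V_0$ disjoint from $B$, i.e. $V_0$ a nonempty $\tau^\alpha$-open subset of $W\setminus F$; but $W\setminus F = W\setminus{\rm cl}_{\tau_\xi}(B)$ could be empty. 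The resolution, exactly as in Theorem~\ref{T:stab2}, is that on the $\tau$-open set $W^\circ={\rm int}_\tau(W)$ (nonempty by $\pi$-semiregularity applied to a $\tau$-open set on which $A$ is non-meager), $A$ is $\tau$-dense, so $B=W\cap C$ is $\tau$-nowhere dense there, whence $W^\circ\subseteq F$ and thus $W^\circ\subseteq V$; then inside $V$ the set $A$ differs from $C^c$ and ${\rm cl}_{\tau_\xi}(B)\cap A$ is $\kappa$-meager, giving $A$ is $\kappa$-comeager in $W^\circ$, which is the desired nonempty $\tau^\alpha$-open set (it is $\tau$-open hence $\tau_0$-open hence $\tau^\alpha$-open).

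The main obstacle I anticipate is bookkeeping around which set plays the role of "$A$" and which plays "$B$" in Lemma~\ref{L:stab}: the lemma is stated for ${\mathbf P}^\nu_\xi$ sets, while our hypothesis is about a $\Sigma$ set, so the whole argument must be run on the complement, and one must carefully track that "$A$ non-meager on every $\tau_\xi$-open piece of $W$" translates into "$B=C\cap W$ is nowhere $\xi_+$-slight" in the precise sense the lemma requires, using Lemma~\ref{L:inclu}(ii) and $\kappa$-Baireness of $\tau$ at each step. Once the roles are pinned down, the extraction of the final nonempty $\tau^\alpha$-open set on which $A$ is $\kappa$-comeager is routine and parallels the displayed computations in the proof of Theorem~\ref{T:stab2}.
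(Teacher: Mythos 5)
Your proposal has two genuine gaps, and they compound. First, the application of Lemma~\ref{L:stab} to $C=X\setminus A$ with $B=C\cap W$ is not justified: the lemma requires that $B\cap U$ be non-$\xi_+$-slight for every $\tau_\xi$-open $U$ meeting $B$, and your argument for this --- that ``$U\cap W$ carries $A$ non-meagerly, so $C\cap U\cap W$ \ldots is itself large enough'' --- is a non sequitur. Non-meagerness of $A$ on $U\cap W$ says nothing about the size of its complement there; in the very situation the Claim is meant to produce, $A$ is $\kappa$-comeager on some piece of $W$, so $C\cap W$ is $\kappa$-meager there and may perfectly well be $\xi_+$-slight. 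Second, even granting that application, the conclusion of Lemma~\ref{L:stab} is that ${\rm cl}_{\tau_\xi}(B)\cap A$ is $\xi_-$-slight, i.e.\ that $A$ is \emph{small} on ${\rm cl}_{\tau_\xi}(B)$ --- the wrong direction. Your attempted repair in the final paragraph rests on the false implication ``$A$ is $\tau$-dense in $W^\circ$, so $B=W\cap C$ is $\tau$-nowhere dense there'' (a dense set can have dense complement), and after you deduce $W^\circ\subseteq{\rm cl}_{\tau_\xi}(B)$, the facts you actually have entail that $A\cap W^\circ$ is $\kappa$-meager, contradicting the defining property of $W$; the closing phrase ``giving $A$ is $\kappa$-comeager in $W^\circ$'' does not follow from anything preceding it.

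The paper's proof goes the other way around the $\Sigma/\Pi$ duality: it writes $A=\bigcup_{\eta<\nu}B_\eta$ with each $B_\eta\in{\mathbf P}^\nu_{\gamma_\eta}$ for some $\gamma_\eta<\xi$ (from the definition of ${\mathbf S}^\nu_\xi$ together with Observation~\ref{O:comp}(i)), uses Observation~\ref{O:bpr} and the $\kappa$-nonmeagerness of $A$ to find $\eta_0$ and a nonempty $\tau$-open $U$ with $U\setminus B_{\eta_0}$ $\kappa$-meager, and then applies Lemma~\ref{L:last} to enlarge $U$ to a $\tau^\alpha$-open $V$ with $V\setminus B_{\eta_0}$ still $\kappa$-meager. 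The point is to isolate a single ${\mathbf P}$-piece of $A$ that is already comeager on a $\tau$-open set and then upgrade that open set to a $\tau^\alpha$-open one; neither the union decomposition, nor the Baire property of the pieces, nor Lemma~\ref{L:last} appears in your write-up. If you do want to argue through the complement, the workable route is Lemma~\ref{L:slal} applied to $X\setminus A\in{\mathbf P}^\nu_\xi$: the resulting $\tau_\xi$-closed $F$ has $(X\setminus A)\setminus F$ $\xi_+$-slight and $F\cap A$ $\xi_-$-slight, so $A$ is $\kappa$-comeager in the $\tau_\xi$-open (hence $\tau^\alpha$-open) set $X\setminus F$, which is nonempty since otherwise $A=F\cap A$ would be $\kappa$-meager. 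That would be a correct --- and different --- proof, but it is not the argument you wrote.
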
 

\noindent{\em Proof of Claim.} If $\xi=0$, then $A$ is $\sigma$-open and so $\tau^\alpha$-open. Assume $0< \xi<\alpha$. 
We can find a cardinal $\nu<\kappa$ such that $\xi<\nu^+$ and $A\in {\mathbf S}^\nu_\xi$. 
Then, by Observation~\ref{O:comp}(i), there are sets $B_\eta$, for $\eta<\nu$, such that $B_\eta$ is in ${\mathbf P}^{\nu}_{\gamma_\eta}$, for some $\gamma_\eta<\xi$, and 
\[
A=\bigcup_{\eta< \nu} B_\eta.
\]
By Observation~\ref{O:bpr}, each $B_\eta$ has the $\kappa$-Baire property with respect to $\tau$. Therefore, since $\tau$ is $\kappa$-Baire and since $A$ is $\kappa$-nonmeager, there exists a $\tau$-open set $U$ and $\eta_0<\nu$ such that 
\[
U\not= \emptyset \;\hbox{ and }\;U\setminus B_{\eta_0}\hbox{ is $\kappa$-meager}. 
\]
By Lemma~\ref{L:last}, we get a $\tau^\alpha$-open set $V$ such that 
\[
U\subseteq V\;\hbox{ and }\; V\setminus B_{\eta_0}\hbox{ is $\kappa$-meager}. 
\]
As in the previous proof, to reach the above consequence of Lemma~\ref{L:last}, we consider separately the case of a successor $\alpha$ (when $\tau^\alpha=\tau_{\alpha-1}$) and the case of a limit $\alpha$. It follows that $V$ is a nonempty $\tau^\alpha$-open set, in which $A$ is comeager, and the claim is proved. 
\medskip

We show that, for each $\tau$-open set $U$, 
\begin{equation}\label{E:mcl2}
U\setminus {\rm int}_{\tau^\alpha}\big( {\rm cl}_\tau(U)\big)\hbox{ is $\tau$-nowhere dense}.
\end{equation}
If, for some $\tau$-open set $U$, this is not the case, then there exists a nonempty $\tau$-open set $V$ such that 
\begin{equation}\label{E:mid}
U\setminus {\rm int}_{\tau^\alpha}\big( {\rm cl}_\tau(U)\big)\hbox{ is $\tau$-dense in $V$}.
\end{equation} 
Since ${\rm int}_{\tau^\alpha}\big( {\rm cl}_\tau(U)\big)$ is $\tau$-open, as it is $\tau^\alpha$-open, we see that \eqref{E:mid} implies 
\begin{equation}\label{E:not}
V\cap {\rm int}_{\tau^\alpha}\big( {\rm cl}_\tau(U)\big)=\emptyset.
\end{equation} 
On the other hand, we have that $V\cap U\not=\emptyset$. By our assumptions on $\tau$, $V\cap U$ contains a $\kappa$-nonmeager subset in 
${\mathbf \Sigma}^{\kappa,0}_\xi$ for some $\xi<\alpha$. So by the claim, there exists a nonempty $\tau^\alpha$-open set $W$ such that $U\cap V$ is $\kappa$-comeager in $W$. It follows that $W\setminus U$ and $W\setminus V$ are $\kappa$-meager. So, since $\tau$ is $\kappa$-Baire, we have 
\[
W\subseteq {\rm cl}_\tau(U)\;\hbox{ and }\; W\subseteq {\rm cl}_\tau(V). 
\]
From the first inclusion above, using $\tau^\alpha$-openness of $W$, we get 
\[
W\subseteq {\rm int}_{\tau^\alpha}\big( {\rm cl}_\tau(U)\big).
\]
From the second inclusion, using \eqref{E:not} and $\tau$-openness of ${\rm int}_{\tau^\alpha}\big( {\rm cl}_\tau(U)\big)$, we get
\[
W\cap {\rm int}_{\tau^\alpha}\big( {\rm cl}_\tau(U)\big)=\emptyset. 
\]
So $W=\emptyset$. This contradiction proves \eqref{E:mcl2}.  

It follows from $\tau^\alpha\subseteq \tau$ that for each $\tau$-regular open set $U$, we have 
\[
 {\rm int}_{\tau^\alpha}\big( {\rm cl}_\tau(U)\big)\subseteq {\rm int}_\tau\big( {\rm cl}_\tau(U)\big) = U. 
\]
On the other hand, by \eqref{E:mcl2} and by $\tau$ being $\kappa$-Baire, we have that ${\rm int}_{\tau^\alpha}\big( {\rm cl}_\tau(U)\big)$ 
is nonempty if $U$ is nonempty. 
In particular, ${\rm int}_{\tau^\alpha}\big( {\rm cl}_\tau(U)\big)$ is a nonempty $\tau^\alpha$-open set included in the nonempty $\tau$-regular open set  $U$. 
The conclusion of the theorem follows from $\tau$ being $\pi$-semiregular. 
\end{proof}

\bigskip 

\noindent {\bf Acknowledgement.} I would like to thank Alan Dow for pointing out to me reference \cite{St} as the origin of the notion of semiregularity. I would also like to thank Mateusz Lichman for our conversations about \cite{So3} that brought my attention back to the subject.

\end{document}